\newcommand{\C}{\ensuremath{\mathbb{C}}}
\newcommand{\N}{\ensuremath{\mathbb{N}}}
\newcommand{\cC}{\mathcal{C}}
\newcommand{\cH}{\mathcal{H}}
\newcommand{\cK}{\mathcal{K}}
\newcommand{\cO}{\mathcal{O}}
\newcommand{\id}{\mathord{\text{\rm id}}}
\newcommand{\G}{\mathbb{G}}
\newcommand{\al}{\alpha}
\newcommand{\ot}{\otimes}
\newcommand{\eps}{\varepsilon}
\newcommand{\cB}{\mathcal{B}}
\renewcommand{\ker}{\operatorname{Ker}}
\newcommand{\ran}{\operatorname{Ran}}
\newcommand{\Ghat}{\hat{\mathbb G}}
\newcommand{\vNaG}{L^{\infty}(\mathbb G)}
\newcommand{\uniG}{C_u(\mathbb G)}
\newcommand{\uniC}{C_u(\mathcal C)}
\newcommand{\1}{\mathbbm{1}}
\newcommand{\tube}{\mathcal{AC}}
\newcommand{\cA}{\mathcal{A}}
\newcommand{\be}{\beta}
\newcommand{\ga}{\gamma}
\newcommand{\ep}{\varepsilon}
\DeclareMathOperator{\Rep}{Rep}
\DeclareMathOperator{\Irr}{Irr}
\DeclareMathOperator{\Mor}{Mor}
\DeclareMathOperator{\End}{End}
\DeclareMathOperator{\mult}{mult}
\DeclareMathOperator{\ind}{ind}
\theoremstyle{definition}
\newtheorem{theorem}{Theorem}[section]
\newtheorem{definition}[theorem]{Definition}
\newtheorem{lemma}[theorem]{Lemma}
\newtheorem{proposition}[theorem]{Proposition}
\newtheorem{corollary}[theorem]{Corollary}
\newtheorem{remark}[theorem]{Remark}
\author{Yuki Arano}
\thanks{YA was supported by a fellowship of the Japan Society for the Promotion of Science and the Program for Leading Graduate Schools, MEXT, Japan}
\address{Yuki Arano
\newline Department of Mathematics, Kyoto university
\newline Kitashirakawa-Oiwakecho, Sakyo-ku, Kyoto, 606-8502, Japan}
\email{y.arano@math.kyoto-u.ac.jp}
\author{Tim de Laat}
\thanks{TdL is supported by the Deutsche Forschungsgemeinschaft (SFB 878)}
\address{Tim de Laat
\newline Mathematisches Institut, Westf\"alische Wilhelms-Universit\"at M\"unster
\newline Einsteinstrasse 62, 48149, Germany}
\email{tim.delaat@uni-muenster.de}
\author{Jonas Wahl}
\thanks{JW is supported by European Research Council Consolidator Grant 614195}
\address{Jonas Wahl
\newline KU Leuven, Department of Mathematics
\newline Celestijnenlaan 200B -- Box 2400, 3001 Leuven, Belgium}
\email{jonas.wahl@kuleuven.be}
\title{The Fourier algebra of a rigid $C^{\ast}$-tensor category}
\begin{document}

\begin{abstract}
  Completely positive and completely bounded mutlipliers on rigid $C^{\ast}$-tensor categories were introduced by Popa and Vaes. Using these notions, we define and study the Fourier-Stieltjes algebra, the Fourier algebra and the algebra of completely bounded multipliers of a rigid $C^{\ast}$-tensor category. The rich structure that these algebras have in the setting of locally compact groups is still present in the setting of rigid $C^{\ast}$-tensor categories. We also prove that Leptin's characterization of amenability still holds in this setting, and we collect some natural observations on property (T).
\end{abstract}

\maketitle

\section{Introduction}
Let $G$ be a locally compact group. The Fourier algebra $A(G)$ of $G$ is the Banach algebra consisting of the matrix coefficients of the left-regular representation of $G$, and the Fourier-Stieltjes algebra $B(G)$ of $G$ is the Banach algebra consisting of the matrix coefficients of all unitary representations of the group. Both these algebras were introduced by Eymard \cite{eymard}. Nowadays, they play an important role in analytic group theory, in particular in the study of approximation and rigidity properties for groups (see e.g.~\cite{brownozawa}). In this respect, also their relation to operator algebras is fundamental: the dual of $A(G)$ can be identified with the group von Neumann algebra $L(G)$ of $G$, and $B(G)$ can be identified with the dual of the universal group $C^{\ast}$-algebra $C^{\ast}(G)$ of $G$. In the study of approximation and rigidity properties for groups, also the Banach algebra of completely bounded Fourier multipliers plays a fundamental role. Analogues of the Fourier-Stieltjes algebra, the Fourier algebra and the algebra of completely bounded Fourier multipliers have been studied quite extensively in the setting of quantum groups, starting with the work of Daws \cite{daws}.

A couple of years ago, fundamentally new results on approximation and rigidity properties for quantum groups were proven \cite{decommerfreslonyamashita}, \cite{arano1}. Partly relying on these results, Popa and Vaes formulated the theory of unitary representations for ``subfactor related group-like objects'' (e.g.~quantum groups, subfactors and $\lambda$-lattices), in the setting of rigid $C^{\ast}$-tensor categories \cite{popavaes} (see also \cite{neshveyevyamashita} and \cite{ghoshjones}). The representation category of a compact quantum group and the standard invariant of a subfactor are important and motivating examples of rigid $C^{\ast}$-tensor categories. Intimately related to the unitary representation theory are the notions of completely positive and completely bounded multipliers, which are particularly important in the study of approximation and rigidity properties. Popa and Vaes studied such properties for subfactor related group-like objects in \cite{popavaes}. Recently, more new results on approximation and rigidity properties for subfactor related group-like objects were proven \cite{cjones}, \cite{aranovaes}, \cite{aranodelaatwahl}, \cite{tarragowahl}.

The aim of this article is to study the structure of the collections of completely positive and completely bounded multipliers. In particular, we define and study analogues of the Fourier-Stieltjes algebra, the Fourier algebra and the algebra of completely bounded Fourier multipliers in the setting of rigid $C^{\ast}$-tensor categories. It turns out that in this setting they also form Banach algebras, and that the operator algebraic structure of these algebras is still present.

It turns out that in the setting of rigid $C^{\ast}$-tensor categories, we still have Leptin's characterization of amenability (see Theorem \ref{thm:leptin}). Moreover, we collect some observations on property (T) for quantum groups and rigid $C^{\ast}$-tensor categories (see Section \ref{sec:propertyt}).

\section*{Acknowledgements}
The authors thank Stefaan Vaes for valuable discussions, suggestions and remarks, and for permitting them to include his proof of Theorem \ref{thm:fima}.

\section{Preliminaries}
\subsection{Rigid $C^{\ast}$-tensor categories} \label{subsec:cstc}
A $C^{\ast}$-tensor category is a category that behaves similar to the category of Hilbert spaces. For the basic theory of $C^{\ast}$-tensor categories and the facts mentioned in this subsection, we refer to \cite[Chapter 2]{neshveyevtuset}.

In what follows, all tensor categories will be assumed to be strict, unless explicitly mentioned otherwise. This is not a fundamental restriction, since every tensor category can be strictified.

Let $\mathcal{C}$ be a $C^{\ast}$-tensor category. An object $\bar{u}$ in $\mathcal{C}$ is conjugate to an object $u$ in $\mathcal{C}$ if there are $R \in \Mor(\1,  \bar{u} \ot u)$ and $\bar{R} \in \Mor(\1, u \ot \bar{u})$ such that
\[
  u \xrightarrow{1 \ot R} u \ot \bar{u} \ot u \xrightarrow{\bar{R}^* \ot 1 } u \ \ \text{and} \ \ \bar{u} \xrightarrow{1 \ot \bar{R}} \bar{u} \ot u \ot \bar{u} \xrightarrow{R^* \ot 1} \bar{u}
\]
are the identity morphisms. Conjugate objects are uniquely determined up to isomorphism. If every object has a conjugate object, then the category $\mathcal{C}$ is called a rigid $C^{\ast}$-tensor category.

Let $\mathrm{Irr}(\mathcal{C})$ denote the set of equivalence classes of irreducible objects in $\mathcal{C}$. Using the same notation as above, if $u$ is an irreducible object with a conjugate, then $d(u) = \|R \| \| \bar{R} \|$ is independent of the choice of the morphisms $R$ and $\bar{R}$. An arbitrary object $u$ in a rigid $C^{\ast}$-tensor category is unitarily equivalent to a direct sum $u \cong \bigoplus_{k} u_k$ of irreducible objects, and we put $d(u) = \sum_{k} d(u_k)$. The function $d : \mathcal{C} \to [0, \infty)$ defined in this way is called the intrinsic dimension of $\mathcal{C}$.   

\subsection{Multipliers on rigid $C^{\ast}$-tensor categories}
Multipliers on rigid $C^{\ast}$-tensor categories were introduced by Popa and Vaes \cite{popavaes}.
\begin{definition} \label{mult}
A multiplier on a rigid $C^{\ast}$-tensor category $\cC$ is a family of linear maps 
\[
  \theta_{\alpha,\beta} : \End(\alpha \ot \beta) \to  \End(\alpha \ot \beta)
\]
indexed by $\alpha, \beta \in \cC$ such that
\begin{align}
  \theta_{\alpha_2,\beta_2}(UXV^{\ast}) &= U\theta_{\alpha_1,\beta_1}(X)V^{\ast}, \nonumber \\
  \theta_{\alpha_1 \ot \alpha_2,\beta_1 \ot \beta_2} (1 \ot X \ot 1) &= 1 \ot \theta_{\alpha_2, \beta_1}(X) \ot 1 \label{eq:equation1}
\end{align}
for all $\alpha_i, \beta_i \in \cC, X \in \End(\alpha_2 \ot \beta_1)$ and $U,V \in \mathrm{Mor}(\alpha_1,\alpha_2) \ot \mathrm{Mor}(\beta_2,\beta_1)$.
\end{definition}
A multiplier $(\theta_{\alpha,\beta})$ is said to be completely positive (or a cp-multiplier) if all maps $\theta_{\alpha,\beta}$ are completely positive. A multiplier $(\theta_{\alpha,\beta})$ is said to be completely bounded (or a cb-multiplier) if all maps $\theta_{\alpha,\beta}$ are completely bounded and $\| \theta \|_{\mathrm{cb}} = \sup_{\alpha, \beta \in \cC} \| \theta_{\alpha,\beta}\|_{\mathrm{cb}} < \infty$. By \cite[Proposition 3.6]{popavaes}, every multiplier $(\theta_{\alpha, \beta})$ is uniquely determined by a family of linear maps $\Mor(\alpha \ot \bar{\alpha},\1) \to \Mor(\alpha \ot \bar{\alpha}, \1), \ \alpha \in \Irr(\cC)$. Since $\Mor(\alpha \ot \bar{\alpha}, \1)$ is one-dimensional whenever $\alpha$ is irreducible, each of these linear maps is given by multiplication with a scalar $\varphi(\alpha) \in \C, \ \alpha \in \Irr(\cC),$ and hence every multiplier corresponds uniquely to a function $\varphi: \Irr(\cC) \to \C$. Therefore, when we speak of a multiplier we will often mean the underlying function $\varphi: \Irr(\cC) \to \C$.

\subsection{The fusion algebra and admissible $\ast$-representations} \label{subsec:admissiblerepresentations}
Recall that the fusion algebra $\C[\cC]$ of a rigid $C^{\ast}$-tensor category $\cC$ is defined as the free vector space with basis $\Irr(\cC)$ and multiplication given by
\[
  \alpha \beta = \sum_{\gamma \in \Irr(\cC)} \mult(\alpha \ot \beta, \gamma) \gamma, \ \ \ \alpha, \beta \in \Irr(\cC).
\]
In fact, the fusion algebra is a $\ast$-algebra when equipped with the involution $\alpha^{\sharp}=\bar{\alpha}$.

In \cite{popavaes}, Popa and Vaes defined the notion of admissible $\ast$-representation of $\mathbb{C}[\mathcal{C}]$ as a unital $\ast$-representation $\Theta: \C[\cC] \to B(\cH)$ such that for all $\xi \in \cH$ the map
\[
  \Irr(\cC) \to \C, \; \alpha \to d(\alpha)^{-1} \langle \Theta(\alpha) \xi, \xi \rangle
\]
is a cp-multiplier. Moreover, they proved the existence of a universal admissible $\ast$-representation and denoted the corresponding enveloping $C^{\ast}$-algebra of $\C[\cC]$ by $\uniC$.

\subsection{The tube algebra}
In \cite{ghoshjones}, Ghosh and Jones related the representation theory of rigid $C^{\ast}$-tensor categories to Ocneanu's tube algebra, which was introduced in \cite{ocneanu}. More precisely, Ghosh and Jones proved that a representation of $\C[\cC]$ is admissible in the sense of Popa and Vaes if and only if it is unitarily equivalent to the restriction of a $*$-representation of the tube algebra to $\C[\cC]$. While we do not elaborate on this in detail, the tube algebra picture is convenient when studying completely bounded multipliers (see in particular Proposition \ref{ArVaes}).

Let us recall the definition of the tube algebra. Let $\cC$ be a rigid $C^{\ast}$-tensor category. For each equivalence class $\alpha \in \mathrm{Irr}(\mathcal{C})$, choose a representative $X_{\alpha} \in \alpha$, and let $X_0$ denote the representative of the tensor unit. Moreover, let $\Lambda$ be a countable family of equivalence classes of objects in $\cC$ with distinct representatives $Y_{\beta} \in \beta$ for every $\beta \in \Lambda$. The annular algebra with weight set $\Lambda$ is defined as
\[
  \cA \Lambda = \bigoplus_{\al, \be \in \Lambda, \ \ga \in \Irr(\cC)} \Mor(X_{\ga} \ot Y_{\al}, Y_{\be} \ot X_{\ga}).
\]
The algebra $\cA \Lambda$ comes equipped with the structure of an associative $*$-algebra. We will always assume the weight set $\Lambda$ to be full, i.e.~every irreducible object is equivalent to a subobject of some element in $\Lambda$. The annular algebra with weight set $\Lambda = \Irr(\cC)$ is called the tube algebra of Ocneanu, and we write $\cA \Lambda = \tube$.

\subsection{Unitary half braidings} \label{sec:unitaryhalfb}
Another approach to the representation theory of a rigid $C^*$-tensor category was developed in \cite{neshveyevyamashita} in terms of unitary half braidings on ind-objects. This approach is particularly well behaved when one is interested in taking tensor products of representations, a fact we will make use of in the proof of Theorem \ref{FourierStieltjesBanach}. Let us recall that intuitively, an ind-object $X \in \ind \cC$ is a possibly infinite direct sum of objects in the rigid $C^*$-tensor category $\cC$ and that $\ind \cC$ is a $C^*$-tensor category containing $\cC$, albeit generically not a rigid one. For a rigorous definition and additional details, see \cite{neshveyevyamashita}. A unitary half braiding $\sigma$ on an ind-object $X \in \ind \cC$ was defined in \cite{neshveyevyamashita} as a family of unitary morphisms $\sigma_{\alpha} \in \Mor ( \alpha \ot X, X \ot \alpha ), \ \alpha \in  \cC $ satisfying
\begin{itemize}
\item $\sigma_{\1} = \id$;
\item $(1 \ot V) \sigma_{\alpha} = \sigma_{\beta} (V \ot 1)$ for all $V \in \Mor(\alpha,\beta)$;
\item $\sigma_{\alpha \ot \beta} = (\sigma_{\alpha} \ot 1) (1 \ot \sigma_{\beta})$.
\end{itemize}
Every pair $(X, \sigma)$ consisting of an ind-object $X$ and a unitary half braiding $\sigma$ on $X$, defines a $*$-representation of $\C[\cC]$ on the Hilbert space $\cH_{(X, \sigma)} = \Mor_{\ind \cC}(\1, X)$ with inner product $\langle \xi, \eta \rangle 1 = \eta^* \xi$. More concretely, if we choose a set of representatives $Y_{\alpha}$ for $\al \in \Irr(\cC)$ with standard solution of the conjugate equations $(R_{Y_{\alpha}}, \bar{R}_{Y_{\alpha}} )$, then 
\[ \pi_{(X, \sigma)}: \C[\cC] \to B(\cH_{(X, \sigma)}), \quad \pi(\alpha) \xi = (1 \ot \bar{R}_{Y_{\alpha}}^*)(\sigma_{Y_{\alpha}} \ot 1)(1 \ot \xi \ot 1) \bar{R}_{Y_{\alpha}} \]
defines a $*$-representation. Note that a different choice of representatives yields a unitarily equivalent $*$-representation. It was shown in \cite{neshveyevyamashita} that any admissible representation is unitarily equivalent to a representation of the above form. More generally, for an explicit bijection between unitary half braidings on ind-objects and (non-degenerate) $*$-representations of the tube algebra, see \cite[Proposition 3.14]{popashlyakhtenkovaes}.

\section{The Fourier-Stieltjes algebra} \label{sec:fourieralgebra}
Let $\mathcal{C}$ be a rigid $C^{\ast}$-tensor category, and let $\mathbb{C}[\mathcal{C}]$ denote its fusion algebra. The notion of admissible $\ast$-representation and the universal admissible $\ast$-representation, as introduced by Popa and Vaes in \cite{popavaes}, were recalled in Section \ref{subsec:admissiblerepresentations}. Admissible $\ast$-representations can be used to define the Fourier-Stieltjes algebra of a $C^{\ast}$-tensor category.
\begin{definition}
The Fourier-Stieljes algebra $B(\cC)$ of a rigid $C^{\ast}$-tensor category $\cC$ is the algebra of functions $\varphi: \Irr(\mathcal{C}) \to \C$ of the form
\[
  \varphi(\alpha) = d(\alpha)^{-1} \langle \Theta(\alpha) \xi, \eta \rangle \ \ \ (\alpha \in \Irr(\mathcal{C})),
\]
where $\Theta : \mathbb{C}[\cC] \to B(\cK)$ is an admissible $\ast$-representation of the fusion algebra and $\xi, \eta \in \cK$. We call such a function $\varphi$ a (matrix) coefficient of $\Theta$. The algebra structure is given by pointwise multiplication.
\end{definition}
\begin{remark}
To see that the Fourier-Stieltjes algebra is a unital algebra, note that it can also be defined as the span of the cp-multipliers $CP(\mathcal{C})$ on $\cC$, i.e.
\[
  \cB(\cC) = \left\{ \sum_{i=1}^n \lambda_i \varphi_i \;\Bigg\vert\; n \in \N, \  \lambda_i \in \mathbb{C}, \ \varphi_i \in CP(\cC), \ i=1,\dots,n \right\}.
\]
Indeed, it follows from the definition of cp-multiplier that the product of two such multipliers is a cp-multiplier again.
\end{remark}
We will now equip $\cB(\cC)$ with a norm that turns it into a Banach algebra.
\begin{proposition}\label{FSa}
The map $\Phi_0 : CP(\cC) \to \uniC^{\ast}_+, \ \Phi_0(\varphi)(\alpha) = \omega_{\varphi}(\alpha) = d(\alpha) \varphi(\alpha)$ extends linearly to an isomorphism of vector spaces $\Phi: \cB(\cC) \to \uniC^{\ast} $. Moreover, for an element $\varphi \in \cB(\cC)$, we have the following equality of norms:
\[
  \| \varphi \|_{\cB(\cC)} := \| \Phi(\varphi) \| = \min \{\| \xi \| \| \eta \| \mid \ \varphi(\cdot) = d(\cdot)^{-1} \langle \Theta(\cdot) \xi, \eta \rangle, \ \Theta \text{ admissible} \}.
\]
\end{proposition}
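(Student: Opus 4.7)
My plan is to treat this as the rigid $C^{\ast}$-tensor category analogue of Eymard's identification $B(G) \cong C^{\ast}(G)^{\ast}$. The proof rests on two ingredients: the Popa--Vaes correspondence between cp-multipliers and positive functionals on $\uniC$, and the polar decomposition of bounded linear functionals on the $C^{\ast}$-algebra $\uniC$ via its enveloping von Neumann algebra $\uniC^{\ast\ast}$.

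First I would establish that $\Phi_0$ is a well-defined bijection $CP(\cC) \to \uniC^{\ast}_+$. Given $\varphi \in CP(\cC)$, one has $\varphi(\alpha) = d(\alpha)^{-1}\langle\Theta(\alpha)\xi,\xi\rangle$ for some admissible $\ast$-representation $\Theta$ and vector $\xi$ by the Popa--Vaes correspondence; extending $\Theta$ to $\uniC$, the formula $\omega_{\varphi}(a) := \langle \Theta(a)\xi,\xi\rangle$ defines a positive functional on $\uniC$ whose restriction to $\Irr(\cC)$ is $d(\cdot)\varphi(\cdot)$, and which is independent of the chosen decomposition because $\Irr(\cC)$ spans the dense subalgebra $\C[\cC] \subset \uniC$. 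Conversely, GNS produces from any $\omega \in \uniC^{\ast}_+$ a non-degenerate (and hence, by the universal property defining $\uniC$, admissible) representation realizing it. For the linear extension, I would use that $\cB(\cC) = \spann CP(\cC)$ (by the remark preceding the proposition) and that every element of $\uniC^{\ast}$ is a linear combination of positive functionals by Jordan decomposition; the map $\Phi$ is then simultaneously well-defined, injective, and surjective because any finite sum $\sum_i \lambda_i \omega_{\varphi_i}$ vanishes on $\uniC$ if and only if it vanishes on the spanning set $\Irr(\cC)$, i.e.\ if and only if $\sum_i \lambda_i \varphi_i \equiv 0$.

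For the norm equality, the inequality $\|\Phi(\varphi)\| \leq \|\xi\|\|\eta\|$ follows for any admissible decomposition of $\varphi$ by extending $\Theta$ to $\uniC$ and observing that $\Phi(\varphi)(a) = \langle\Theta(a)\xi,\eta\rangle$ holds on the dense subalgebra $\C[\cC]$, hence on $\uniC$. The reverse inequality, together with the attainment of the infimum, is where the polar decomposition enters: every $\omega \in \uniC^{\ast}$ admits a realization $\omega(a) = \langle \pi(a)\xi,\eta\rangle$ with $\pi$ a $\ast$-representation of $\uniC$ and $\|\xi\|\|\eta\| = \|\omega\|$; restricting $\pi$ (or its non-degenerate part) to $\C[\cC]$ yields an admissible representation realizing $\varphi$ with the required minimal norm product. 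The main technical subtlety I anticipate is invoking the polar decomposition correctly, using normality of $\omega$ as a functional on $\uniC^{\ast\ast}$ to obtain the sharp norm equality, and verifying that the resulting representation of $\uniC$ restricts to an admissible representation of $\C[\cC]$; the latter is automatic from the universal property defining $\uniC$.
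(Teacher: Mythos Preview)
Your proposal is correct and follows essentially the same approach as the paper: the Popa--Vaes correspondence (their Proposition 4.2) for the bijection $\Phi_0$, Jordan decomposition for the linear extension, and polar decomposition in $\uniC^{**}$ combined with GNS for the sharp norm equality. The only organizational difference is that the paper isolates the norm identity as a separate lemma valid for an arbitrary unital $C^{\ast}$-algebra $A$ (namely $\|\omega\| = \min\{\|\xi\|\|\eta\| : \omega = \langle\Theta(\cdot)\xi,\eta\rangle\}$ over $\ast$-representations of $A$), and then applies it with $A = \uniC$; you describe the same argument inline.
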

\begin{proof}
By definition of $\uniC$ and \cite[Proposition 4.2]{popavaes}, the map $\Phi_0$ is well-defined, and so is $\Phi$. It is clear that $\Phi$ defines a bijection. The second part follows directly from the following lemma.
\end{proof}

\begin{lemma}
Let $A$ be a unital $C^*$-algebra. For all $\omega \in A^*$, we have the following equality of norms:
\[ \| \omega \| = \min \{\| \xi \| \| \eta \| \mid \ \omega(\cdot) = \langle \Theta(\cdot) \xi, \eta \rangle, \ \Theta \ *-\text{representation of } A \}. \]
\end{lemma}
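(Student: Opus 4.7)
The plan is to verify the ``$\le$'' direction directly and to attain the minimum in the other direction via the polar decomposition of $\omega$ in $A^{**}$ combined with the GNS construction.

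For the easy direction, given any $*$-representation $\Theta : A \to B(\cH)$ and vectors $\xi, \eta \in \cH$ with $\omega(a) = \langle \Theta(a) \xi, \eta \rangle$, one has $|\omega(a)| \le \|\Theta(a)\| \|\xi\| \|\eta\| \le \|a\| \|\xi\| \|\eta\|$, whence $\|\omega\| \le \|\xi\|\|\eta\|$. Taking the infimum over all admissible triples shows that the right-hand side of the claimed equality is at least $\|\omega\|$.

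For the reverse direction, I would invoke the polar decomposition of bounded linear functionals on a $C^*$-algebra (due to Sakai): there exist a positive functional $|\omega| \in A^*_+$ with $\||\omega|\| = \|\omega\|$ and a partial isometry $v \in A^{**}$ such that $\omega(a) = |\omega|(av)$ for all $a \in A$, where $|\omega|$ has been extended by normality to $A^{**}$. Applying the GNS construction to $|\omega|$ then produces a cyclic $*$-representation $(\pi_{|\omega|}, \cH, \xi)$ with $|\omega|(a) = \langle \pi_{|\omega|}(a) \xi, \xi \rangle$ for every $a \in A$ and $\|\xi\|^2 = \||\omega|\| = \|\omega\|$.

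To conclude, I would extend $\pi_{|\omega|}$ uniquely to a normal $*$-representation $\tilde\pi : A^{**} \to B(\cH)$ and set $\eta = \tilde\pi(v)\xi$. Since $v$ is a partial isometry, $\|\tilde\pi(v)\| \le 1$, and hence $\|\eta\| \le \|\xi\|$. Moreover,
\[ \omega(a) = |\omega|(av) = \langle \tilde\pi(av)\xi, \xi \rangle = \langle \pi_{|\omega|}(a) \eta, \xi \rangle, \]
exhibiting $\omega$ as a matrix coefficient of $\pi_{|\omega|}$ with $\|\eta\|\|\xi\| \le \|\xi\|^2 = \|\omega\|$. Combined with the trivial direction this forces equality, so the infimum is attained. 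The only nontrivial input is the polar decomposition in $A^{**}$, which may be cited as a black box; once this is available, the extension of $\pi_{|\omega|}$ to a normal representation of $A^{**}$, the norm bound $\|\tilde\pi(v)\| \le 1$, and the final identification are routine.
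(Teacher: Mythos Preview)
Your argument is correct and follows essentially the same route as the paper: polar decomposition of $\omega$ in $A^{**}$, GNS construction for $|\omega|$, and realization of the second vector via the normal extension of the GNS representation applied to the partial isometry. The only cosmetic differences are the order of the two inequalities and the naming of the vectors $\xi$ and $\eta$.
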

Although this is a well-known result, for the sake of completeness, we include a proof.
\begin{proof}
 Since we can view $A^{\ast}$ as the predual of the von Neumann algebra $A^{**}$, we can consider the polar decomposition of $\omega$, i.e.~there exists a positive normal functional $| \omega | \in A^{\ast}_+ $ and a partial isometry $V \in A^{**}$ such that $\omega = V |\omega|$ and $\| \omega \| = \| \ | \omega | \ \| $. Consider the GNS-representation $\Theta: A \to B(\cK)$ of $|\omega|$, which has a cyclic vector, say $\eta$, i.e.~$|\omega|(x) = \langle \Theta(x) \eta, \eta \rangle$ for all $x \in A$. As a consequence, we obtain that
\[
  \omega(x) = (V |\omega|)(x) = |\omega|(xV) = \langle \Theta(x) \Theta'(V) \eta, \eta \rangle \ \ \forall x \in A,
\]
where $\Theta'$ is the unique extension of $\Theta$ to $A^{**}$. Defining $\xi = \Theta'(V) \eta $, we have $\| \xi \| \leq \| \eta \|$, since $V$ is a partial isometry. Altogether, the set on the right side of the equality which is to be proven is nonempty and we have $\| \omega \| =  \| \ | \omega | \ \| = \| \eta \|^2 \geq \| \xi \| \| \eta \|$. \\
On the other hand, for every $*$-representation $\Theta:A \to B(\cH)$ and $\xi, \eta \in \cH$ such that $\omega(\cdot) = \langle \Theta(\cdot) \xi, \eta \rangle$ we have
\[ | \omega(x) | = | \langle \Theta(x) \xi, \eta \rangle | \leq \| x \| \| \xi \| \| \eta \| \quad \forall x \in A.  \]
\end{proof}

\begin{theorem} \label{FourierStieltjesBanach}
Let $\cC$ be a rigid $C^{\ast}$-tensor category. Then $\cB(\cC)$ is a Banach algebra with respect to the norm defined in the previous proposition.
\end{theorem}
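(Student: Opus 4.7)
The plan is to reduce the theorem to two things: completeness of $\cB(\cC)$ in the given norm, and submultiplicativity $\|\varphi_1 \varphi_2\|_{\cB(\cC)} \leq \|\varphi_1\|_{\cB(\cC)} \|\varphi_2\|_{\cB(\cC)}$. Completeness is immediate from Proposition \ref{FSa}, which provides an isometric vector space isomorphism $\Phi \colon \cB(\cC) \to \uniC^{\ast}$, and the Banach space on the right is complete. That $\cB(\cC)$ is an algebra under pointwise multiplication was already explained in the Remark preceding Proposition \ref{FSa} (pointwise products of cp-multipliers are cp-multipliers, and $\cB(\cC) = \spann CP(\cC)$). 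So the content of the theorem really lies in the norm estimate.

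For submultiplicativity, I would construct a tensor product of admissible representations using the half-braiding description of Section \ref{sec:unitaryhalfb}, as the authors hint. Given $\varphi_1, \varphi_2 \in \cB(\cC)$, Proposition \ref{FSa} allows us to choose admissible representations realizing $\|\varphi_i\|_{\cB(\cC)}$ as a minimum. Using the bijection with unitary half braidings on ind-objects, write these as $\pi_{(X_i, \sigma_i)}$ with vectors $\xi_i, \eta_i \in \Mor_{\ind \cC}(\1, X_i)$ satisfying $\varphi_i(\alpha) = d(\alpha)^{-1} \langle \pi_{(X_i, \sigma_i)}(\alpha) \xi_i, \eta_i \rangle$ and $\|\xi_i\| \|\eta_i\| = \|\varphi_i\|_{\cB(\cC)}$.

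On the ind-object $X_1 \otimes X_2$, I would define the standard tensor product half braiding
\[
  \sigma_{\alpha} \;=\; (1_{X_1} \ot (\sigma_2)_{\alpha})\,((\sigma_1)_{\alpha} \ot 1_{X_2}) \in \Mor(\alpha \ot X_1 \ot X_2, X_1 \ot X_2 \ot \alpha), \quad \alpha \in \cC,
\]
and verify that it satisfies the three axioms in Section \ref{sec:unitaryhalfb}; these are direct consequences of the corresponding axioms for $\sigma_1$ and $\sigma_2$ and the functoriality of the tensor product. The vectors $\xi_1 \ot \xi_2$ and $\eta_1 \ot \eta_2$ lie in $\cH_{(X_1 \ot X_2, \sigma)} = \Mor_{\ind \cC}(\1, X_1 \ot X_2)$ and satisfy $\|\xi_1 \ot \xi_2\| = \|\xi_1\| \|\xi_2\|$ and similarly for $\eta$, since $(\xi_1 \ot \xi_2)^{\ast}(\xi_1 \ot \xi_2) = \xi_1^{\ast}\xi_1 \cdot \xi_2^{\ast}\xi_2$.

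The central step, and the main obstacle, is the matrix coefficient identity
\[
  \langle \pi_{(X_1 \ot X_2, \sigma)}(\alpha)(\xi_1 \ot \xi_2),\, \eta_1 \ot \eta_2 \rangle \;=\; d(\alpha)\, \varphi_1(\alpha)\, \varphi_2(\alpha) \quad \text{for every } \alpha \in \Irr(\cC).
\]
This is a diagrammatic computation that combines the explicit formula $\pi(\alpha) \xi = (1 \ot \bar R_{Y_{\alpha}}^{\ast})(\sigma_{Y_{\alpha}} \ot 1)(1 \ot \xi \ot 1) \bar R_{Y_{\alpha}}$, the tensor product form of $\sigma_{Y_\alpha}$, and the conjugate equations together with the snake/Frobenius identities for the standard solutions $(R_{Y_{\alpha}}, \bar R_{Y_{\alpha}})$. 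It is essentially the categorical analogue of the classical identity that matrix coefficients of a tensor product of unitary representations are products of matrix coefficients, and is routine once the half-braiding formalism is set up, but it is where the specific algebraic compatibilities must be checked. Granted this identity, the admissibility of $\pi_{(X_1 \ot X_2, \sigma)}$ and Proposition \ref{FSa} give
\[
  \|\varphi_1 \varphi_2\|_{\cB(\cC)} \leq \|\xi_1 \ot \xi_2\| \, \|\eta_1 \ot \eta_2\| = \|\xi_1\|\|\eta_1\|\|\xi_2\|\|\eta_2\| = \|\varphi_1\|_{\cB(\cC)} \|\varphi_2\|_{\cB(\cC)},
\]
completing the proof.
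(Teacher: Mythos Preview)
Your proposal is correct and follows essentially the same approach as the paper: completeness from the isometric identification $\cB(\cC)\cong\uniC^*$, and submultiplicativity via the tensor product of unitary half braidings $(X_1\ot X_2,\,(1\ot\sigma_2)(\sigma_1\ot 1))$ with vectors $\xi_1\ot\xi_2$, $\eta_1\ot\eta_2$ (the paper writes these as $(\xi_1\ot 1)\xi_2$, $(\eta_1\ot 1)\eta_2$, which is the same morphism by strictness). The paper makes the ``central step'' you describe explicit via the intermediate identity $\pi_{(X,\sigma)}(\alpha)\xi = d(\alpha)^{-1}(\pi_{(X_1,\sigma_1)}(\alpha)\xi_1\ot 1)(\pi_{(X_2,\sigma_2)}(\alpha)\xi_2)$, from which your matrix-coefficient equality follows immediately.
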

\begin{proof}
The definition of $\| \cdot \|_{\cB(\cC)}$ directly implies that $(\cB(\cC), \| \cdot \|_{\cB(\cC)})$ is a Banach space and hence we are only left with showing that $\| \varphi_1 \varphi_2  \|_{\cB(\cC)} \leq \| \varphi_1 \|_{\cB(\cC)} \| \varphi_2 \|_{\cB(\cC)}$ for $\varphi_1, \varphi_2 \in \cB(\cC)$. Now, by Proposition \ref{FSa} and the discussion in Section \ref{sec:unitaryhalfb}, for $i=1,2$ we can find pairs $(X_i, \sigma_i)$ of ind-objects $X_i \in \ind \cC$ and unitary half braidings $\sigma_i$ on $X_i$ as well as $\xi_i, \eta_i \in \cH_{(X_i,\sigma_i)}$ such that
\[ \varphi_i(\alpha) = d(\alpha)^{-1} \langle \pi_{(X_i,\sigma_i)}(\alpha) \xi_i, \eta_i \rangle \quad \text{and} \quad \| \varphi_i \|_{\cB(\cC)} = \| \xi_i \| \| \eta_i \|. \]
Following \cite{neshveyevyamashita}, $\sigma = (1 \ot \sigma_2)(\sigma_1 \ot 1)$ defines a unitary half braiding on $X = X_1 \ot X_2 \in \ind \cC$. Recall from \cite{neshveyevyamashita} that, in the same way as unitary half braidings are generalizations of group representations, this new half braiding is the proper analogue of the tensor product of the unitary half braidings $(X_1, \sigma_1)$ and $(X_2, \sigma_2)$. We have $\xi = (\xi_1 \ot 1) \xi_2, \eta = (\eta_1 \ot 1) \eta_2 \in \Mor_{\ind \cC}(\1, X_1 \ot X_2) = \cH_{(X,\sigma)}$ with $\| \xi \| = \| \xi_1 \| \| \xi_2 \|, \| \eta \| = \| \eta_1 \| \| \eta_2 \|. $ Choosing representatives $Y_{\alpha}$ for irreducible objects $\al \in \Irr(\cC)$ as in Section \ref{sec:unitaryhalfb} and using the fact that $\bar{R}_{Y_{\alpha}}^* \bar{R}_{Y_{\alpha}} = d(\alpha) \in \Mor(\1, \1)$, we compute 
\[
  \pi_{(X,\sigma)}(\alpha) \xi = d(\alpha)^{-1} (\pi_{(X_1,\sigma_1)}(\alpha) \xi_1 \ot 1) (\pi_{(X_2,\sigma_2)}(\alpha) \xi_2),
\]
and hence
\[
  \varphi_1(\alpha) \varphi_2(\alpha) = d(\alpha)^{-1} \langle \pi_{(X,\sigma)}(\alpha) \xi, \eta \rangle,
\]
which finishes the proof.
\end{proof}

\section{The Fourier algebra}
Recall that by \cite[Corollary 4.4]{popavaes}, the left regular representation of $\mathbb{C}[\cC]$ given by
\[
  \lambda: \mathbb{C}[\cC] \to B(\ell^2(\Irr(\cC))), \ \lambda(\alpha) \delta_{\beta} = \sum_{\gamma \in \Irr(\cC)} \mult(\alpha \ot \beta, \gamma) \delta_{\gamma}
\]
is admissible and corresponds to the cp-multiplier defined by $\varphi_{\lambda}(\alpha) = \delta_{\alpha, \1} \ (\alpha \in \Irr(\cC))$.
\begin{definition}
The Fourier algebra $A(\cC)$ of a rigid $C^{\ast}$-tensor category $\cC$ is defined as the predual of the von Neumann algebra $\lambda(\mathbb{C}[\cC])''$.
\end{definition}
Recall that there is a one-to-one correspondence between functions on $\Irr(\cC)$ and functionals $\omega : \mathbb{C}[\cC] \to \mathbb{C}$ given by $\varphi \mapsto \omega_{\varphi}$, where $\omega_{\varphi}(\al) = d(\al) \varphi(\al)$. By this correspondence, $A(\cC)$ can also be interpreted as an algebra of functions on $\Irr(\cC)$.
\begin{proposition} \label{Fa}
For every $\omega \in A(\cC)$, there exist $\xi, \eta \in \ell^2(\Irr(\cC))$ such that $\omega(x) = \langle \lambda(x) \xi, \eta \rangle$. In addition, 
\[ \| \omega \|_{A(\cC)} = \min \{\| \xi \| \| \eta \| \mid \omega(\cdot) = \langle \lambda(\cdot) \xi, \eta \rangle, \ \xi,\eta \in \ell^2(\Irr(\cC)) \}. \]
\end{proposition}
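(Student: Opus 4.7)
The plan is to show that the regular representation $\lambda$ realizes the von Neumann algebra $M := \lambda(\C[\cC])''$ in standard form on $\ell^{2}(\Irr(\cC))$, with $\delta_{\1}$ as a cyclic separating trace vector, and then to mirror the proof of the previous lemma. The identity $\lambda(\alpha)\delta_{\1} = \delta_{\alpha}$ for $\alpha \in \Irr(\cC)$ shows that $M\delta_{\1}$ is dense in $\ell^{2}(\Irr(\cC))$. Moreover, a direct computation from the fusion rules gives
\[
  \tau(\lambda(\alpha)\lambda(\beta)) := \langle \lambda(\alpha)\lambda(\beta)\delta_{\1}, \delta_{\1} \rangle = \mult(\alpha \ot \beta, \1),
\]
which equals $1$ when $\beta = \bar{\alpha}$ and $0$ otherwise; in particular this expression is symmetric in $\alpha$ and $\beta$, so the normal vector state $\tau$ is tracial on $\lambda(\C[\cC])$ and, by normality, tracial on $M$. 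Standard arguments then yield faithfulness of $\tau$, so that $\delta_{\1}$ is separating and $(M, \ell^{2}(\Irr(\cC)), \delta_{\1})$ is the standard form of $M$.

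Given $\omega \in A(\cC) = M_{*}$, I would consider its polar decomposition $\omega = V|\omega|$, where $V \in M$ is a partial isometry and $|\omega| \in M_{*}^{+}$ satisfies $\| |\omega| \| = \|\omega\|_{A(\cC)}$. Since $M$ acts in standard form, $|\omega|$ is a vector state: there exists $\eta \in \ell^{2}(\Irr(\cC))$ with $|\omega|(x) = \langle x\eta, \eta \rangle$ and $\|\eta\|^{2} = \| |\omega| \|$. Setting $\xi := V\eta \in \ell^{2}(\Irr(\cC))$ then gives
\[
  \omega(x) = |\omega|(xV) = \langle xV\eta, \eta \rangle = \langle x\xi, \eta \rangle,
\]
and, since $V$ is a partial isometry, $\|\xi\|\|\eta\| \leq \|\eta\|^{2} = \|\omega\|_{A(\cC)}$.

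The reverse inequality is immediate from Cauchy--Schwarz and $\sigma$-weak density: for any decomposition $\omega(\cdot) = \langle \lambda(\cdot)\xi, \eta \rangle$ with $\xi, \eta \in \ell^{2}(\Irr(\cC))$ and any $x \in \lambda(\C[\cC])$, we have $|\omega(x)| \leq \|\lambda(x)\|\,\|\xi\|\,\|\eta\|$; this bound extends by normality of $\omega$ and $\sigma$-weak density of $\lambda(\C[\cC])$ to all of $M$, yielding $\|\omega\|_{A(\cC)} \leq \|\xi\|\|\eta\|$. The main obstacle is the standard form reduction in the first paragraph; once this is in hand, the rest of the argument is a direct replay of the proof of the preceding lemma, with $\ell^{2}(\Irr(\cC))$ playing the role of the GNS space of $\tau$.
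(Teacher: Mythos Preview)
Your proof is correct and follows essentially the same route as the paper's. The paper simply observes that $\lambda$ is the GNS representation of the state associated with $\varphi_{\lambda}(\alpha)=\delta_{\alpha,\1}$ and then cites \cite[Chapter~IX, Lemma~1.6]{takesaki} to conclude that every positive normal functional on $M$ is a vector state, after which the polar decomposition argument from the preceding lemma finishes the proof; you unpack the same standard-form reduction by explicitly verifying that $\delta_{\1}$ is a cyclic trace vector (hence separating), and then carry out the identical polar-decomposition step.
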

\begin{proof}
Since $M = \lambda(\mathbb{C}[\cC])''$ is nothing but the GNS-representation with respect to $\omega_{\varphi}$, where $\varphi_{\lambda}(\alpha) = \delta_{\alpha, \1} \ (\alpha \in \Irr(\cC))$, we can represent every positive normal functional on $M$ as a vector state on $M$ by \cite[Chapter IX, Lemma 1.6]{takesaki}. The result for a general normal functional follows as in Proposition \ref{FSa} by polar decomposition.
\end{proof}
\begin{remark}
It is an immediate consequence of Proposition \ref{Fa} that we have
\[ \| \varphi \|_{B(\cC)} \leq \| \varphi \|_{A(\cC)} \]
 for $ \varphi \in A(\cC)$, and it is not hard to see that the norms are actually equal. Indeed, the dual $C_r(\cC)^*$ of the reduced $C^*$-algebra $C_r(\cC)=\overline{\lambda(\mathbb{C}[\mathcal{C}])}$ identifies isometrically with the dual of a quotient of $C_u(\mathcal{C})$ and hence with the annihilator of a closed ideal in $\uniC$. Consequently, 
 \[ \| \varphi \|_{A(\cC)} = \| \varphi \|_{C_r(\cC)^*} =\| \varphi \|_{\uniC^*} \]
 for $ \varphi \in A(\cC)$.
This means that we could also have defined $A(\cC)$ as the closure of the coefficients of the left regular representation in $\cB(\cC)$. Moreover, we will see in Corollary \ref{CorCB} that $A(\cC)$ is a closed ideal in $\cB(\cC)$ and in particular a Banach algebra itself.
\end{remark}

\section{Completely bounded multipliers} \label{sec:cbm}
In this section, we study the algebra of completely bounded multipliers 
\[
  M_0A(\cC) = \{ \varphi : \Irr(\cC) \to \mathbb{C} \mid \varphi \ \text{cb-multiplier} \}.
\]

While the Fourier algebra $A(\cC)$ is only defined in terms of the fusion algebra $\mathbb{C}[\cC]$, the Fourier-Stieltjes algebra $B(\mathcal{C})$ and the algebra $M_0A(\mathcal{C})$ of completely bounded multipliers use considerably more information on the category $\mathcal{C}$. Therefore, there is no apparent reason why completely bounded multipliers should correspond to completely bounded maps 
on the von Neumann algebra $\lambda(\mathbb{C}[\cC)])''$. However, in the tube algebra setting, the situation is more convenient. Indeed, whenever $\varphi: \Irr(\mathcal{C}) \to \mathbb{C}$ is a function on the irreducibles of $\mathcal{C}$ and $\Lambda$ is a full family of objects, then there is a canonical linear map $M_{\varphi}: \cA \Lambda \to \cA \Lambda$ given by
\[
  M_{\varphi}(x) = \varphi(\gamma) x \ \ \ \text{whenever} \ \ \ x \in \Mor(X_{\gamma} \ot Y_{\al}, Y_{\be} \ot X_{\gamma}).
\]
Let us recall here that a multiplier $\varphi: \Irr(\cC) \to \C$ is called completely bounded if 
$\| \varphi \|_{cb} =  \sup_{\al, \beta \in \cC}  \| \theta^{\varphi}_{\al, \beta} \|_{cb}  < \infty$, where $(\theta^{\varphi}_{\al, \beta})_{\al, \beta \in \cC}$ denotes the family of linear maps associated to $\varphi$ as in Definition \ref{mult} and the discussion thereafter.
In terms of the maps $M_{\varphi}$ on the level of the tube algebra, the characterization of completely bounded multipliers is analogous to the group case. This leads to the following proposition which was proven by Vaes and the first-named author \cite[Proposition 5.1]{aranovaes}.
\begin{proposition} \label{ArVaes}
Let $\cC$ be a rigid $C^{\ast}$-tensor category, let $\Lambda$ be a full family of objects of $\cC$, and let $\varphi: \Irr(\cC) \to \C$ be a function. Moreover, let $M_{\varphi}: \cA \Lambda \to \cA \Lambda$ be defined as above. Then $\|M_{\varphi}\|_{\mathrm{cb}} = \| \varphi \|_{\mathrm{cb}}$. If this cb-norm is finite, then $M_{\varphi}$ extends uniquely to a normal completely bounded map on $ \cA \Lambda'' \subset B(L^2(\cA \Lambda))$.
\end{proposition}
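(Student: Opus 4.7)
The plan is to derive both inequalities in $\|M_\varphi\|_{\mathrm{cb}} = \|\varphi\|_{\mathrm{cb}}$ by exploiting the fact that the tube algebra is essentially an algebraic incarnation of the Drinfeld centre of $\cC$ and that $M_\varphi$ acts as a central scalar on each graded piece. Concretely, I would identify, via the standard solutions $(R_u,\bar R_u)$ to the conjugate equations and Frobenius reciprocity, each weight-pair slice $\bigoplus_{\gamma \in \Irr(\cC)} \Mor(X_\gamma \ot Y_\al, Y_\be \ot X_\gamma)$ with a natural operator subspace of an endomorphism algebra in $\cC$ (or $\ind \cC$), in such a way that the scalar action of $M_\varphi$ on the $\gamma$-component is intertwined with the action of the categorical multiplier $\theta^\varphi$ on the $\gamma$-isotypic part of this endomorphism algebra.

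For the inequality $\|M_\varphi\|_{\mathrm{cb}} \le \|\varphi\|_{\mathrm{cb}}$, I would combine the above identification with an amplification argument: an arbitrary matrix $(x_{ij})$ of elements of $\cA\Lambda$ (with $x_{ij}$ in the $(\be_j,\al_i)$-slice) assembles into a single element of an endomorphism space built from $\bigoplus_i Y_{\al_i}$ and $\bigoplus_j \overline{Y_{\be_j}}$, tensored with a suitable ind-object, under which the amplified $M_\varphi$ becomes an amplification of $\theta^\varphi$. Taking the supremum bounds $\|M_\varphi\|_{\mathrm{cb}}$ from above by $\|\varphi\|_{\mathrm{cb}}$. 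Conversely, for $\|M_\varphi\|_{\mathrm{cb}} \ge \|\varphi\|_{\mathrm{cb}}$, fullness of $\Lambda$ ensures that any $\al,\be \in \cC$ occur as subobjects of some $Y_{\al'},Y_{\be'} \in \Lambda$, so $\End(\al \ot \be)$ embeds completely isometrically as a corner of a matrix amplification of $\cA\Lambda$ and $\theta^\varphi_{\al,\be}$ appears as a compression of the amplified $M_\varphi$; taking the supremum over $\al,\be$ yields the reverse inequality.

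For the normal extension, once finiteness of $\|M_\varphi\|_{\mathrm{cb}}$ on $\cA\Lambda$ is established, $M_\varphi$ preserves the $\gamma$-grading (acting as a scalar on each homogeneous component) and commutes with the right $\cA\Lambda$-action on $L^2(\cA\Lambda)$ in the graded sense, so it extends to a bounded operator on $L^2$. A weak-$*$-density and Kaplansky-type argument then gives a unique normal completely bounded extension to $\cA\Lambda'' \subset B(L^2(\cA\Lambda))$ with the same cb-norm. The main obstacle is the careful verification in the first step that the Frobenius reciprocity identification is completely isometric and genuinely intertwines the two multiplier actions; this is a nontrivial bookkeeping task involving the equivariance axioms of Definition \ref{mult} and the dimension factors $d(\gamma)$ that appear when sliding morphisms past $R_u$ and $\bar R_u$.
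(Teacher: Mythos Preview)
The paper does not contain a proof of this proposition at all: immediately before stating it, the authors write that it ``was proven by Vaes and the first-named author \cite[Proposition 5.1]{aranovaes}'' and then simply state the result without argument. So there is no in-paper proof to compare your proposal against.

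That said, your sketch is in line with the strategy actually used in the cited Arano--Vaes reference. There, one fixes $i \in \Lambda$ and shows that the corner $p_i \cdot \mathcal{A}\Lambda \cdot p_i$ is isomorphic, via Frobenius reciprocity, to an inductive limit of endomorphism algebras $\End(i \otimes j \otimes \bar{i})$ over finite direct sums $j$ of irreducibles, and under this identification $M_\varphi$ becomes $\theta^{\varphi}_{i, j \otimes \bar{i}}$ restricted. This gives $\|M_\varphi\|_{\mathrm{cb}} \le \|\varphi\|_{\mathrm{cb}}$ exactly as you outline. The reverse inequality uses fullness plus the observation that, by the equivariance axiom \eqref{eq:equation1}, every $\theta^\varphi_{\alpha,\beta}$ is a compression of some $\theta^\varphi_{i, j \otimes \bar{i}}$ with $i \in \Lambda$, which is your corner argument. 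Your remark that the ``main obstacle is the careful verification \dots\ that the Frobenius reciprocity identification is completely isometric and genuinely intertwines the two multiplier actions'' is accurate; in the reference this is handled by checking that the isomorphism is a $*$-isomorphism of finite-dimensional $C^*$-algebras (hence automatically completely isometric) and that the intertwining follows from \eqref{eq:equation1}. One small caution: your description of the normal extension via ``commuting with the right $\mathcal{A}\Lambda$-action on $L^2$'' is not quite the mechanism used---the extension in the reference goes through the fact that $M_\varphi$ is already cb on each corner $p_i (\mathcal{A}\Lambda)'' p_i$ (a von Neumann algebra) with uniform bound, and these corners exhaust $(\mathcal{A}\Lambda)''$; normality is then automatic from the finite-dimensional approximation, not from a commutant argument.
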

\begin{corollary} \label{CorCB}
Let $\varphi$ be a completely bounded multiplier. Then,
the multiplication operator
\[ T_{\varphi} : A(\cC) \to A(\cC), \qquad \theta \mapsto \varphi \theta \quad (\theta \in A(\cC)) \]
is well defined and completely bounded with $\| T_{\varphi} \|_{\mathrm{cb}} \leq \| \varphi \|_{\mathrm{cb}}$.
\end{corollary}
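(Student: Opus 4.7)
The strategy is to realize $T_\varphi$ as the preadjoint of a normal completely bounded map on the von Neumann algebra $L(\cC) := \lambda(\C[\cC])''$, and to deduce both well-definedness of $T_\varphi$ and the cb-norm bound from the fact that preadjoints of normal cb maps between von Neumann algebras are cb with equal norm.

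I would first define a candidate dual map $S_\varphi$ on the dense $*$-subalgebra $\lambda(\C[\cC]) \subset L(\cC)$ by $S_\varphi(\lambda(\alpha)) = \varphi(\alpha)\lambda(\alpha)$ for $\alpha \in \Irr(\cC)$. A direct computation using the pairing $\langle \omega_\psi, \lambda(\alpha)\rangle = d(\alpha)\psi(\alpha)$ between $A(\cC)$ and $L(\cC)$ shows that, once $S_\varphi$ is known to extend to a normal map on $L(\cC)$, its preadjoint coincides with the pointwise multiplier $\theta \mapsto \varphi\theta$ on $A(\cC)$. The problem is thus reduced to producing such a normal extension with $\|S_\varphi\|_{\mathrm{cb}} \leq \|\varphi\|_{\mathrm{cb}}$.

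For this extension, I would apply Proposition \ref{ArVaes} with the full weight set $\Lambda = \Irr(\cC)$, so that $\cA\Lambda = \tube$ is Ocneanu's tube algebra. The proposition yields a normal completely bounded extension $\widetilde{M}_\varphi: \tube'' \to \tube''$ of $M_\varphi$ with $\|\widetilde{M}_\varphi\|_{\mathrm{cb}} = \|\varphi\|_{\mathrm{cb}}$. The projection $p_\1$ corresponding to the one-dimensional summand $\Mor(\1,\1)$ inside the $(\alpha,\beta) = (\1,\1)$ sector of $\tube$ satisfies $p_\1 \tube p_\1 = \bigoplus_{\gamma \in \Irr(\cC)} \End(X_\gamma) \cong \C[\cC]$, and on this corner $M_\varphi$ acts by the scalar multiplication $\alpha \mapsto \varphi(\alpha)\alpha$. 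Via the Ghosh--Jones correspondence between admissible representations of $\C[\cC]$ and $*$-representations of $\tube$, the GNS construction on $\tube$ with respect to its canonical trace compresses by $p_\1$ to (a copy of) the GNS representation of $\C[\cC]$ associated with the trace $\alpha \mapsto \delta_{\alpha, \1}$, that is, to the left regular representation on $\ell^2(\Irr(\cC))$. Consequently $p_\1 \tube'' p_\1 \cong L(\cC)$ as von Neumann algebras, and compressing $\widetilde{M}_\varphi$ by $p_\1$ produces the sought normal cb extension of $S_\varphi$, with cb-norm at most $\|\widetilde{M}_\varphi\|_{\mathrm{cb}} = \|\varphi\|_{\mathrm{cb}}$.

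The main technical point will be this von Neumann algebra identification $p_\1 \tube'' p_\1 \cong L(\cC)$, which amounts to matching the canonical trace on $\tube$, used to build $\tube''$, with the trace $\alpha \mapsto \delta_{\alpha, \1}$ on $\C[\cC]$ underlying the left regular representation, and checking that the compression recovers $\ell^2(\Irr(\cC))$. Once this is in place, the remainder is formal: compression by a projection does not increase the cb-norm of a normal map, and passage to preadjoints preserves both normality and cb-norms, so the preadjoint of $S_\varphi$ is precisely $T_\varphi$ with $\|T_\varphi\|_{\mathrm{cb}} \leq \|\varphi\|_{\mathrm{cb}}$.
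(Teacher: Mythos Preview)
Your proposal is correct and follows essentially the same approach as the paper: both arguments deduce the result by recognizing $T_\varphi$ as the preadjoint of the normal completely bounded map furnished by Proposition~\ref{ArVaes}, restricted to $A(\cC)^* = \lambda(\C[\cC])''$. The paper's proof is a two-line sketch that simply asserts this restriction makes sense; you have unpacked the implicit identification by exhibiting $L(\cC)$ as the $p_{\1}$-corner of $\tube''$ via the Ghosh--Jones correspondence, which is exactly the content hiding behind the paper's phrase ``restricting the map $M_\varphi$ to $A(\cC)^*$''.
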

\begin{proof}
The dual map of the multiplication operator $T_{\varphi}$ is given by restricting the map $M_{\varphi}$ to $A(\cC)^*$. By the previous proposition and standard results in operator space theory, the map $T_{\varphi}$ is completely bounded with $ \| T_{\varphi} \| = \| T^*_{\varphi} \| \leq \| \varphi \|_{\mathrm{cb}}$.  
\end{proof}
\begin{corollary} \label{dual}
Let $\cC$ be a rigid $C^{\ast}$-tensor category. Then $M_0A(\cC)$ carries the structure of a dual Banach algebra if we endow it with pointwise addition and multiplication and the cb-norm $\| \cdot \|_{\mathrm{cb}}$.
\end{corollary}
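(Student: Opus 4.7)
The proof naturally splits into two parts: showing that $M_0A(\cC)$ is a Banach algebra, and exhibiting a predual with respect to which multiplication is separately weak-$*$ continuous. My plan is to leverage Proposition \ref{ArVaes} throughout to transport every question about $\varphi$ into a question about the operator $M_{\varphi}$ on the tube algebra and its bicommutant $\cA \Lambda''$.

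First, I would check directly from the definitions that $M_{\varphi \psi}(x) = \varphi(\gamma) \psi(\gamma) x = M_{\varphi}(M_{\psi}(x))$ for $x \in \Mor(X_{\gamma} \ot Y_{\al}, Y_{\be} \ot X_{\gamma})$, so $M_{\varphi \psi} = M_{\varphi} \circ M_{\psi}$ on $\cA \Lambda$. Submultiplicativity $\|\varphi \psi\|_{\mathrm{cb}} \leq \|\varphi\|_{\mathrm{cb}} \|\psi\|_{\mathrm{cb}}$ is then immediate from Proposition \ref{ArVaes} and the fact that the cb-norm is submultiplicative under composition. For completeness, if $(\varphi_n)$ is Cauchy in $\|\cdot\|_{\mathrm{cb}}$, then $(M_{\varphi_n})$ is Cauchy in $\CB(\cA \Lambda'')$ and converges to some normal completely bounded map $T$. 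Since $\Lambda$ is full, for every $\gamma \in \Irr(\cC)$ we can pick $\al, \be$ with $\Mor(X_{\gamma} \ot Y_{\al}, Y_{\be} \ot X_{\gamma}) \neq 0$; evaluating $M_{\varphi_n}$ on a nonzero element there shows that $(\varphi_n(\gamma))$ is Cauchy in $\C$, with limit $\varphi(\gamma)$. A routine continuity argument then forces $T = M_{\varphi}$, and applying Proposition \ref{ArVaes} once more gives $\|\varphi\|_{\mathrm{cb}} = \|T\|_{\mathrm{cb}} < \infty$.

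For the dual Banach algebra structure, I would use Proposition \ref{ArVaes} to isometrically embed $M_0 A(\cC)$ into the space $\CB^\sigma(\cA \Lambda'')$ of normal completely bounded maps on $\cA \Lambda''$ with composition. This latter space is a dual Banach algebra (a standard fact: its predual is the operator space projective tensor product $(\cA \Lambda'')_* \widehat{\otimes} \cA \Lambda''$, and composition is separately weak-$*$ continuous). To obtain the desired structure on $M_0A(\cC)$, the plan is then to show that the image of $M_0A(\cC)$ is weak-$*$ closed in $\CB^\sigma(\cA \Lambda'')$; once this is established, the predual of $M_0A(\cC)$ arises as the quotient of $(\cA \Lambda'')_* \widehat{\otimes} \cA \Lambda''$ by the preannihilator of the image, and separate weak-$*$ continuity of pointwise multiplication is inherited from separate weak-$*$ continuity of composition.

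The key step is the weak-$*$ closedness of the image. Here I would invoke the Krein--Smulian theorem and reduce to the following statement: if $(M_{\varphi_i})$ is a bounded net in $\CB^\sigma(\cA \Lambda'')$ converging point-ultraweakly to some $T$, then $T = M_{\varphi}$ for some function $\varphi \colon \Irr(\cC) \to \C$. This follows by testing against elements of $\Mor(X_{\gamma} \ot Y_{\al}, Y_{\be} \ot X_{\gamma})$ with appropriate ultraweakly continuous functionals: since $M_{\varphi_i}$ acts as multiplication by $\varphi_i(\gamma)$ on this finite-dimensional subspace, the limit must act as multiplication by some scalar $\varphi(\gamma)$, and fullness of $\Lambda$ guarantees that $\varphi(\gamma)$ is unambiguously defined for every $\gamma$. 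This weak-$*$ closedness argument is where I expect the main technical care to be required; everything else is either a routine diagram check on the tube algebra or a direct appeal to Proposition \ref{ArVaes} and standard operator space theory.
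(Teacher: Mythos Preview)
Your strategy---embed $M_0A(\cC)$ isometrically into a dual operator space via $\varphi \mapsto M_\varphi$, show the image is weak-$*$ closed by testing against tube-algebra elements, and read off separate weak-$*$ continuity of multiplication---is exactly the paper's. There is, however, a slip in your choice of ambient space. You assert that $\CB^\sigma(\cA\Lambda'')$ has predual $(\cA\Lambda'')_* \,\hat{\ot}\, \cA\Lambda''$, but the Effros--Ruan/Blecher--Paulsen duality $\CB(X, Y^*) \cong (X \hat{\ot} Y)^*$ applied with $X = \cA\Lambda''$ and $Y = (\cA\Lambda'')_*$ yields $\bigl((\cA\Lambda'')_* \hat{\ot} \cA\Lambda''\bigr)^* \cong \CB(\cA\Lambda'', \cA\Lambda'')$, the space of \emph{all} completely bounded maps, not only the normal ones. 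It is not clear that $\CB^\sigma(M)$ is itself a dual space for a general von Neumann algebra $M$, and if you instead work in $\CB(M,M)$ you face the problem that a weak-$*$ limit of the $M_{\varphi_i}$ need not be normal, so agreement with $M_\varphi$ on the weak-$*$ dense subalgebra $\cA\Lambda$ no longer forces agreement on all of $M$.

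The paper circumvents this by embedding into $\CB(A, M)$, where $A$ is the reduced $C^*$-algebra of $\cA\Lambda$ and $M = \cA\Lambda''$; the duality $\CB(A, M) \cong (A \hat{\ot} M_*)^*$ is then immediate. Because $\cA\Lambda$ is \emph{norm} dense in $A$, any $\Psi \in \CB(A, M)$ agreeing with $M_\varphi$ on $\cA\Lambda$ automatically equals $\widetilde{M}_\varphi$, and the weak-$*$ closedness argument runs as you sketch (the paper does it directly, without Krein--Smulian, testing on $x \in \Mor(X_\gamma \ot \1, \1 \ot X_\gamma)$). Since $\CB(A,M)$ is not itself closed under composition, separate weak-$*$ continuity of multiplication in $M_0A(\cC)$ is checked directly at the end rather than inherited from the ambient space.
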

\begin{proof}

Pick a full family of objects $\Lambda$, say $\Lambda = \Irr(\cC)$, and denote the reduced $C^{\ast}$-algebra of $\cA \Lambda$ by $A$ and its enveloping von Neumann algebra by $M = \cA \Lambda''$.  It follows from a well-known result in operator theory due to Effros and Ruan \cite{effrosruan} and, independently, due to Blecher and Paulsen \cite{blecherpaulsen}, that the space of completely bounded maps $CB(A,M)$ is a dual operator space with predual $A \hat{\ot} M_*$. Here, $\hat{\ot}$ denotes the projective tensor product of operator spaces (see \cite[Chapter 4]{pisier} for details). Let us show that the image of the isometric embedding $ M_0A(\cC) \to CB(A,M), \ \varphi \mapsto \widetilde{M}_{\varphi}$ is w$^*$-closed in $CB(A,M)$, where $\widetilde{M}_{\varphi}$ denotes the unique extension of $M_{\varphi}$ to $A$. This will then imply that $ M_0A(\cC)$ is isomorphic as a Banach space to the dual of a quotient of $A \hat{\ot} M_*$ and in particular to a dual Banach algebra. So, let $(\varphi_{i})$ be a net in $M_0A(\cC)$ such that $(\widetilde{M}_{\varphi_{i}})$ converges to a completely bounded map $\Psi \in CB(A,M)$. In particular, this means that 
\[ \omega(\widetilde{M}_{\varphi_{i}}(x)) \to \omega(\Psi(x)) \quad \text{as} \quad i \to \infty \]
for all $x \in A, \ \omega \in M_*$. By choosing $x \in \Mor(X_{\gamma} \ot \1, \1 \ot X_{\gamma})$ and $\omega \in M_*$ such that $\omega(x) \neq 0$ and by applying the definition of $\widetilde{M}_{\varphi_{i}}(x)$, we find that $\varphi_{i}$ converges pointwise to a bounded function $\varphi$. It follows from a short computation that the restriction of $\Psi$ to $\cA \Lambda$ is equal to $M_{\varphi}$. As a consequence, $\varphi$ is completely bounded by the previous proposition with $\Psi = \widetilde{M}_{\varphi}$. Lastly, it is easy to see that pointwise multiplication of completely bounded maps in $M_0A(\cC)$ is separately w$^*$-continuous, so $M_0A(\cC)$ is a dual Banach algebra.
\end{proof}

\section{Leptin's characterization of amenability}
As defined by Popa and Vaes \cite[Definition 5.1]{popavaes}, a rigid $C^{\ast}$-tensor category $\mathcal{C}$ is said to be amenable if there exists a net of finitely supported cp-multipliers $\varphi_i:\Irr(\mathcal{C}) \to \mathbb{C}$ that converges to $1$ pointwise.

In \cite{leptin}, Leptin proved that a locally compact group is amenable if and only if the Fourier algebra of the group admits a bounded approximate unit. We finish this section by proving a version of Leptin's theorem for rigid $C^*$-tensor categories. Before doing so, we note that, using the dimension function $d:\Irr(\cC) \to \C$, one can turn $\Irr(\cC)$ into a discrete hypergroup (see \cite{muruganandam} for the definition of a hypergroup and its Fourier algebra). In the setting of discrete hypergroups, the existence of a bounded approximate unit on the Fourier algebra implies amenability, but the converse implication does not hold (see \cite{alaghmandan}).

We now state our version of Leptin's theorem in the setting of rigid $C^{\ast}$-tensor categories.
\begin{theorem} \label{thm:leptin}
A rigid $C^*$-tensor category $\mathcal{C}$ is amenable if and only if $A(\cC)$ admits a bounded approximate unit, i.e.~a net $(\varphi_{i})$ in $A(\cC)$ such that $\sup_{i} \| \varphi_{i} \|_{A(\cC)} < \infty$ and for all $f \in A(\cC)$,
\[ \| \varphi_{i} f - f\|_{A(\cC)} \to 0 \quad \text{as} \quad i \to \infty. \]
\end{theorem}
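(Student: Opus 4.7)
The plan is to treat the two implications separately.

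For the direction $\cC$ amenable $\Rightarrow$ $A(\cC)$ has a BAI, let $(\varphi_i)$ be a net of finitely supported cp-multipliers converging pointwise to $1$. The key numerical observation is that for a cp-multiplier $\varphi$ the functional $\omega_\varphi$ is positive on $\uniC$, whence $\|\varphi\|_{\cB(\cC)} = \omega_\varphi(\1) = \varphi(\1)$ by Proposition \ref{FSa}; in particular $\|\varphi_i\|_{\cB(\cC)} = \varphi_i(\1) \to 1$. Every finitely supported function lies in $A(\cC)$, since it arises as $\langle \lambda(\cdot) \xi, \eta\rangle$ with $\xi, \eta \in \ell^2(\Irr(\cC))$ of finite support, and by the remark following Proposition \ref{Fa} the norms $\|\cdot\|_{A(\cC)}$ and $\|\cdot\|_{\cB(\cC)}$ agree there; hence $(\varphi_i)$ is uniformly bounded in $A(\cC)$. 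To establish the approximate unit property I would first note that finitely supported functions are norm dense in $A(\cC)$ (truncate the $\ell^2$-vectors in the matrix coefficient description). For finitely supported $f$, the difference $\varphi_i f - f$ lies in the fixed finite-dimensional subspace of functions supported on $\mathrm{supp}(f)$, on which pointwise convergence coincides with norm convergence; combining this with the uniform cb-bound on the multiplication operators $T_{\varphi_i}$ from Corollary \ref{CorCB} and a standard three-$\varepsilon$ argument then extends the conclusion to arbitrary $f \in A(\cC)$.

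For the converse, assume $(\varphi_i)$ is a BAI in $A(\cC)$ with $\sup_i \|\varphi_i\|_{A(\cC)} \leq C$. I would first establish pointwise convergence $\varphi_i(\alpha_0) \to 1$ for each $\alpha_0 \in \Irr(\cC)$ by applying the BAI property to the rank-one matrix coefficient $f_{\alpha_0}$ corresponding to $\xi = \delta_\1, \eta = \delta_{\alpha_0} \in \ell^2(\Irr(\cC))$: this is a nonzero function supported only at $\alpha_0$, so $\varphi_i f_{\alpha_0} - f_{\alpha_0} = (\varphi_i(\alpha_0) - 1) f_{\alpha_0}$, forcing the desired convergence. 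Next I would exploit the isometric embedding $A(\cC) \hookrightarrow C_r(\cC)^*$ noted in the remark after Proposition \ref{Fa}: the net $(\omega_{\varphi_i})$ is bounded in $C_r(\cC)^*$ by $C$, and on the dense subspace $\C[\cC] \subset C_r(\cC)$ it satisfies $\omega_{\varphi_i}(\alpha) = d(\alpha) \varphi_i(\alpha) \to d(\alpha)$. By Banach--Alaoglu one then extracts a weak-$*$ cluster point $\omega_\infty \in C_r(\cC)^*$ that agrees with the counit $\epsilon : \alpha \mapsto d(\alpha)$ on $\C[\cC]$, and by density $\omega_\infty$ is the unique continuous extension of $\epsilon$ to $C_r(\cC)$. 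In other words, the trivial representation of $\C[\cC]$ is weakly contained in the left regular representation.

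The final and conceptually most delicate step is to pass from this weak containment back to the definition of amenability used by Popa and Vaes. This is the main obstacle in the sense that it is the one step I would cite rather than reprove: the required equivalence is part of the Kesten-type characterization of amenability of rigid $C^*$-tensor categories established in \cite{popavaes}. All remaining steps are formal manipulations with the norm identities collected in Sections \ref{sec:fourieralgebra}--\ref{sec:cbm}.
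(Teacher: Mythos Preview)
Your proposal is correct, and in both directions it takes a cleaner route than the paper.

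In the forward direction, the paper first invokes \cite[Proposition~5.3]{popavaes} to pass from amenability to the extension of $\epsilon$ to $C_r(\cC)$, then lifts $\epsilon$ to a (non-normal) state on $C_r(\cC)''$ and approximates it weak$^*$ by normal states; those are the BAI. You instead use the defining net of finitely supported cp-multipliers directly as the BAI, with the three-$\varepsilon$ argument doing all the work. This avoids the detour through the bidual entirely. One small point to make explicit when you write it out: the uniform bound you feed into Corollary~\ref{CorCB} is $\|\varphi_i\|_{\mathrm{cb}}$, not $\|\varphi_i\|_{B(\cC)}$; for cp-multipliers these coincide (both equal $\varphi_i(\1)$, since $\theta^{\varphi}_{\alpha,\beta}(1) = \varphi(\1)\cdot 1$ by \eqref{eq:equation1}), but this deserves a line. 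Alternatively, just use that $A(\cC)$ is a Banach algebra.

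In the reverse direction the two arguments swap roles: the paper tries to return to the \emph{definition} of amenability by approximating each $\varphi_i$ by finitely supported positive elements of $A(\cC)_1$ (which tacitly requires the BAI to be positive), whereas you go through the \emph{characterization} of amenability: boundedness of $(\omega_{\varphi_i})$ plus pointwise convergence on $\C[\cC]$ forces $\epsilon$ to extend continuously to $C_r(\cC)$, and multiplicativity on the dense subalgebra makes the extension a character, so \cite[Proposition~5.3]{popavaes} applies. Your route sidesteps the positivity issue and is the more robust argument. The Banach--Alaoglu step is harmless but unnecessary: from $|\omega_{\varphi_i}(x)| \le C\|x\|_{C_r(\cC)}$ and pointwise convergence on $\C[\cC]$ you get $|\epsilon(x)| \le C\|x\|_{C_r(\cC)}$ directly.
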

In order to prove this theorem, we first prove the following lemma.
\begin{lemma} \label{lem.approx}
The space of finitely supported functions in the unit ball $A(\cC)_1$ is norm dense in $A(\cC)_1$, i.e. $A(\cC)_1 = \overline{c_c(\Irr(\cC)) \cap A(\cC)_1}^{A(\cC)}$.
\end{lemma}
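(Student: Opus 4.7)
The plan is to use the vector-coefficient representation of Proposition \ref{Fa} and truncate in $\ell^2(\Irr(\cC))$. Given $\varphi \in A(\cC)_1$, by Proposition \ref{Fa} we can find $\xi, \eta \in \ell^2(\Irr(\cC))$ with $\omega_{\varphi}(\lambda(x)) = \langle \lambda(x)\xi,\eta\rangle$ and $\|\xi\|\|\eta\| = \|\varphi\|_{A(\cC)} \leq 1$. Choosing an increasing sequence of finite subsets $F_n \subset \Irr(\cC)$ exhausting $\Irr(\cC)$ and letting $P_n$ denote the orthogonal projection onto $\ell^2(F_n)$, I set $\xi_n = P_n\xi$, $\eta_n = P_n\eta$. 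Since projections are contractive, $\|\xi_n\|\|\eta_n\| \leq \|\xi\|\|\eta\| \leq 1$, and $\xi_n \to \xi$, $\eta_n \to \eta$ in $\ell^2(\Irr(\cC))$.

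Next I define $\varphi_n \in A(\cC)$ by $d(\alpha)\varphi_n(\alpha) = \langle \lambda(\alpha)\xi_n,\eta_n\rangle$. Proposition \ref{Fa} immediately gives $\|\varphi_n\|_{A(\cC)} \leq \|\xi_n\|\|\eta_n\| \leq 1$, so $\varphi_n \in A(\cC)_1$. The key step is verifying that each $\varphi_n$ is finitely supported. Writing $\xi_n = \sum_{\beta \in F_n} c_\beta \delta_\beta$ and $\eta_n = \sum_{\gamma \in F_n} d_\gamma \delta_\gamma$, expanding via the formula for $\lambda(\alpha)$ yields
\[
  \langle \lambda(\alpha)\xi_n,\eta_n\rangle = \sum_{\beta,\gamma \in F_n} c_\beta \overline{d_\gamma}\, \mult(\alpha \ot \beta, \gamma),
\]
which vanishes unless $\alpha$ is a subobject of $\gamma \ot \bar{\beta}$ for some $\beta, \gamma \in F_n$. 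Rigidity of $\cC$ ensures that there are only finitely many such $\alpha$, so $\varphi_n$ has finite support.

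Finally, I need norm convergence $\varphi_n \to \varphi$ in $A(\cC)$. The standard telescoping identity
\[
  \langle \lambda(x)\xi,\eta\rangle - \langle \lambda(x)\xi_n,\eta_n\rangle = \langle \lambda(x)(\xi-\xi_n),\eta\rangle + \langle \lambda(x)\xi_n, \eta-\eta_n\rangle
\]
exhibits $\omega_{\varphi - \varphi_n}$ as a sum of two vector functionals on $\lambda(\C[\cC])''$, and another application of Proposition \ref{Fa} yields
\[
  \|\varphi - \varphi_n\|_{A(\cC)} \leq \|\xi - \xi_n\|\|\eta\| + \|\xi_n\|\|\eta - \eta_n\| \longrightarrow 0.
\]
The only step that requires categorical input (as opposed to standard Hilbert space arguments) is the finite-support verification for $\varphi_n$; everything else is a direct transfer of the classical density argument for the Fourier algebra of a discrete group, enabled by the explicit formula for $\lambda$ given just before the definition of $A(\cC)$.
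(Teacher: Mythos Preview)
Your proof is correct and follows essentially the same approach as the paper's: represent $\varphi$ as a vector coefficient via Proposition \ref{Fa}, truncate $\xi,\eta$ to finitely supported vectors of no greater norm, and use the telescoping estimate $\|\varphi_{\xi_1,\eta_1}-\varphi_{\xi_2,\eta_2}\|\leq\|\xi_1-\xi_2\|\|\eta_1\|+\|\eta_1-\eta_2\|\|\xi_2\|$ to conclude. Your verification that $\varphi_n$ has finite support via Frobenius reciprocity is more explicit than the paper's one-line assertion, but the argument is the same.
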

\begin{proof}
Note first that if $\xi, \eta \in c_c(\Irr(\cC))$ are finitely supported functions, the same holds for the matrix coefficient $\varphi_{\xi,\eta}(\al) = d(\al)^{-1} \langle \lambda(\al) \xi, \eta \rangle, \ \al \in \Irr(\cC)$. Since we can approximate any function in $\ell^2(\Irr(\cC))$ by finitely supported ones of smaller norm, every $\varphi \in A(\cC)_1$ can be approximated in norm by functions of the form $\varphi_{\xi,\eta}$ with $\xi, \eta \in c_c(\Irr(\cC))$ and $\| \xi \|, \| \eta \| \leq 1$. More precisely, this follows from the inequality
\[ \| \varphi_{\xi_1,\eta_1} - \varphi_{\xi_2,\eta_2} \| \leq \| \xi_1 - \xi_2 \| \| \eta_1 \| + \| \eta_1 - \eta_2 \| \| \xi_2 \| \]
for all $\xi_i, \eta_i \in A(\cC), \ i=1,2$, which is easily established.
\end{proof}

\begin{proof}[Proof of Theorem \ref{thm:leptin}]
Assume first that $\cC$ is amenable. By \cite[Proposition 5.3]{popavaes}, this means that the trivial representation $\epsilon$ given by $\epsilon(\al) = d(\al), \ \al \in \Irr(\cC)$ extends to a character on $C_r(\cC)$ which we can extend to a (not necessarily normal) state on $C_r(\cC)''$. Since the unit ball of every Banach space is w$^*$-dense in the unit ball of its double dual, there exists a net of normal states $(\omega_{i})$ on $C_r(\cC)''$ such that for all $x \in  C_r(\cC)''$,
\[ \omega_{i}(x) \to \epsilon(x) \quad \text{as} \quad i \to \infty.\]
Let $\varphi_{i} \in A(\cC)$ such that $\omega_{i} = \omega_{\varphi_{i}}$. By the previous lemma, it suffices to show that for all $f \in c_c(\Irr(\cC)) \cap A(\mathcal{C})_1$,
 \[ \| \omega_{\varphi_{i} f - f}\| \to 0 \quad \text{as} \quad i \to \infty. \]
 Let $f \in c_c(\Irr(\cC))$. The operator given by
 \[ T_{f}: \C[\cC] \to \C[\cC], \  T_{f}( \al) = f(\al) \al \quad \al \in \Irr(\cC) \]
 extends to a (completely) bounded finite rank operator on $C_r(\cC)''$ (see Proposition \ref{ArVaes}) with norm $\| T_f \| = K$ for some $K > 0$. We have
 \[ \| \omega_{\varphi_{i} f - f}\| = \sup_{\| x \| \leq 1} | \omega_{\varphi_{i} -1} (T_f(x)) | \leq \sup_{ y \in \ran T_f, \ \|y \| \leq K } | \omega_{\varphi_{i} -1} (y)|.  \]
 But since $\ran T_f$ is finite-dimensional and $\omega_{\varphi_{i} -1} \to 0$ as $i \to \infty$ in the w$^*$-topology, the result follows.\\
 
 Let us now assume that $A(\cC)$ admits a bounded approximate unit $(\varphi_{i})$ with $\| \varphi_{i} \|_{A(\cC)}  \leq 1$ for all $i$. Let $\al \in \Irr(\cC)$. By putting $f = \delta_{\al} \in A(\cC)$, the characteristic function of $\al$, and $x = d(\al)^{-1} \lambda(\al)$, we obtain
 \[ |\varphi_i(\al) -1| \| \lambda(\al) \| = |\omega_{\varphi_{i}f -f} (x) | \to 0 \quad \text{as } i \to \infty \]
 and hence $(\varphi_{i})$ converges to $1$ pointwise. Now, using Lemma \ref{lem.approx}, we can approximate every $\varphi_{i}$ by a net $(\phi^j_{i})$ in $A(\cC)_1 \cap c_c(\Irr(\cC))$ and since $\varphi_{i}$ is a positive element of $A(\cC)$, the function $\phi^j_{i}$ can also be chosen to be positive for all $i$ and $j$. The net $(\phi_{i}^j)_{(i,j)}$ in $A(\cC)_1 \cap c_c(\Irr(\cC))$ converges to $1$ pointwise, which proves the amenability of the category by \cite[Proposition 5.3]{popavaes}.
\end{proof}

\section{Remarks on property (T)} \label{sec:propertyt}
The material of Section \ref{sec:fourieralgebra} gives rise to some observations on property (T) in the setting of $C^{\ast}$-tensor categories that are motivated by Kazhdan's property (T) in the setting of groups. Kazhdan's property (T) is a rigidity property for locally compact groups that has numerous consequences and applications in mathematics. It was introduced in \cite{kazhdan}, in which it was also shown that countable discrete groups with property (T) are finitely generated. Property (T) can be generalized to other settings, such as quantum groups and rigid $C^{\ast}$-tensor categories, and usually the natural analogue of finite generation is still an important consequence of property (T). In particular, Popa and Vaes showed that this is indeed the case in the setting of rigid $C^{\ast}$-tensor categories \cite[Proposition 5.4]{popavaes}.

The definition of property (T) in the context of rigid $C^{\ast}$-tensor categories by Popa and Vaes and two characterizations of this property obtained in \cite{popavaes}, are given in the following definition.
\begin{definition}
A rigid $C^{\ast}$-tensor category $\cC$ has property (T) if one (and hence all) of the following equivalent conditions is satisfied.
\begin{enumerate}[(i)]
\item Every net $(\varphi_{\lambda})$ of cp-multipliers $\varphi_{\lambda} : \Irr \cC \to \mathbb{C}$ converging to $\varphi_{\eps}$ pointwise converges uniformly, i.e. $\sup_{x \in \Irr \cC} |\varphi_{\lambda}(x) -1| \to 0$.
\item If $(\omega_{\lambda})$ is a net of states on $\uniC$ converging to $\eps$ in the weak*-topology, it must already converge in norm.
\item There exists a unique nonzero projection $p \in \uniC $ such that $\alpha p = d(\alpha) p$ for all $\alpha \in \cC$. Such a projection is the analogue of a Kazhdan projection in the setting of groups.
\end{enumerate}
\end{definition}
Let $\cC$ be a rigid $C^{\ast}$-tensor category, and let $W(\cC) = B(\cC)^* = \uniC^{**}$ be the enveloping von Neumann algebra of the full $C^{\ast}$-algebra of $\mathcal{C}$. Since the multiplier $\varphi_{\ep}: \Irr(\cC) \to \mathbb{C}$ given by $\varphi_{\ep}(\al) = 1$ is completely positive by \cite[Corollary 4.4]{popavaes}, the counit $\ep: \C[\cC] \to \C$ extends to a normal $*$-homomorphism on $W(\cC)$.

It is known that for every locally compact group $G$, the Fourier--Stieltjes algebra $B(G)$ has a unique invariant mean. This goes back to \cite[Chapitre III]{godement}. This result was generalized to the setting of locally compact quantum groups in \cite{dawsskalskiviselter}. The next proposition asserts the existence of an invariant mean on the Fourier-Stieltjes algebra of $\cC$, but we formulate it in terms of the existence of a central projection on $W(\mathcal{C})$. 
\begin{proposition}
Let $A$ be a unital $C^*$-algebra and let $\chi: A \to \C$ be a character on $A$.
There exists a unique projection $p$ in the von Neumann algebra $A^{**}$ such that 
\[ x p = p x = \chi(x) p \quad \text{ for all $x \in A^{**}$.} \]
In particular, setting $A = C_u(\cC), \ \chi= \ep$, we find a unique projection $p \in B(\cC)^* = W(\cC)$ such that $\ep(p) = 1$ and $\langle \omega, \al p \rangle = \langle \omega, p \al \rangle = d(\al) \langle \omega, p \rangle$ for all $\al \in \Irr(\cC)$ and $\omega \in B(\cC)$. 
\end{proposition}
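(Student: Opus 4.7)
The plan is to pass to the unique normal extension of $\chi$ and to identify $p$ with a central projection arising from its kernel. First, since $\chi$ is a unital $*$-homomorphism (and in particular a state), it extends uniquely to a normal state $\tilde\chi$ on $A^{**}$ via the identification of $A^*$ with the predual of $A^{**}$. The extension remains multiplicative: given $x,y \in A^{**}$ and nets $x_i,y_j \in A$ with $x_i \to x$ and $y_j \to y$ $\sigma$-weakly, separate $\sigma$-weak continuity of multiplication together with the multiplicativity of $\chi$ on $A$ yields $\tilde\chi(xy) = \tilde\chi(x)\tilde\chi(y)$. Hence $\tilde\chi:A^{**} \to \C$ is a normal $*$-homomorphism.

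The kernel $\ker\tilde\chi$ is then a weak$^*$-closed two-sided ideal of the von Neumann algebra $A^{**}$, so there is a unique central projection $z \in A^{**}$ with $\ker\tilde\chi = A^{**}z$. Setting $p = 1-z$, one obtains a central projection with $\tilde\chi(p)=1$, and in particular $p \neq 0$. For any $x \in A^{**}$, the element $x - \tilde\chi(x)\,1$ lies in $\ker\tilde\chi$ and is therefore annihilated by $p$ from either side, which gives
\[
  xp = px = \tilde\chi(x)\,p \qquad \text{for all } x \in A^{**}.
\]
For uniqueness, if $q$ is another nonzero projection with the same property, then taking $x=q$ forces $\tilde\chi(q)=1$, after which the identities $pq = \tilde\chi(q)\,p = p$ (from the defining property of $p$) and $pq = \tilde\chi(p)\,q = q$ (from the defining property of $q$) imply $p=q$.

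The ``in particular'' statement is then obtained by specializing to $A = \uniC$ and $\chi = \eps$, using $W(\cC) = \uniC^{**}$ together with $\tilde\eps(\alpha) = \eps(\alpha) = d(\alpha)$ for every $\alpha \in \Irr(\cC) \subset \uniC$. I expect the only real point of care to be verifying that the normal extension of $\chi$ is still multiplicative; once that is in place, everything else is a formal consequence of the structure of weak$^*$-closed two-sided ideals in von Neumann algebras.
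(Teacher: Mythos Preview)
Your proof is correct and follows essentially the same route as the paper: extend $\chi$ normally to $A^{**}$, take the weak$^*$-closed ideal $\ker\tilde\chi$, and let $p$ be the complement of its unit (equivalently, of its central support projection). Your derivation of $xp=\tilde\chi(x)p$ via $x-\tilde\chi(x)1\in\ker\tilde\chi$ is in fact more direct than the paper's, which instead checks the identity first on projections (splitting into the cases $\chi(q)=0$ and $\chi(q)=1$) and then passes to arbitrary $x$ using that a von Neumann algebra is the norm-closed span of its projections; you also make the multiplicativity of the normal extension explicit, which the paper simply asserts.
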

\begin{proof}
Uniqueness of $p$ is immediate. To prove the existence, note that, since $\chi$ is a normal $\ast$-homomorphism, its kernel $\ker(\chi)$ is weakly closed and therefore a von Neumann algebra itself. Denote its unit by $e_{\chi}$. Then the central cover $p = 1 - e_{\chi}$ of $\chi$ is a projection in $A^{**}$ satisfying $qp = pq = p$ for all $q$ with $\chi(q) = 1$. On the other hand, if $q$ is a projection in $\ker(\chi)$, we have $pq =0$. Since every von Neumann algebra is the norm closure of the span of its projections, and $\chi$ is in particular norm continuous, the result follows.
\end{proof}
\begin{remark}
  In the group case, it was shown in \cite[Lemma 1]{akemannwalter} (see also \cite[Lemma 3.1]{valette} and \cite[Proposition 4.1]{haagerupknudbydelaat}) that a locally compact group $G$ has Kazhdan's property (T) if and only if the unique invariant mean on $B(G)$ is weak$^{\ast}$-continuous, i.e.~the mean is an element of $C^{\ast}(G)$ rather than just $C^{\ast}(G)^{\ast\ast}$. In fact, under the natural map from $B(G)^{\ast}$ to $C^{\ast}(G)^{\ast\ast}$, the mean is mapped to the Kazhdan projection, which is by weak$^{\ast}$-continuity actually an element of $C^{\ast}(G)$.

  By characterization (iii) above, we see that the same thing happens for $C^{\ast}$-tensor categories: a rigid $C^{\ast}$-tensor category $\mathcal{C}$ has property (T) if and only if the mean on $B(\mathcal{C})$ is weak$^{\ast}$-continuous.
\end{remark}
\begin{remark}
  In the group case, the unique invariant mean on $B(G)$ is the restriction to $B(G)$ of the unique invariant mean on the space $\mathrm{WAP}(G)$ of weakly almost periodic functions on $G$, which is well-known to have a unique invariant mean. Indeed, note that $B(G) \subset \mathrm{WAP}(G)$. Hence, the only thing one needs to show is that this restriction is the unique invariant mean on $B(G)$. In a similar fashion, it is shown (see \cite[Theorem A]{haagerupknudbydelaat}) that the space $M_0A(G)$ of completely bounded Fourier multipliers on $G$ admits a unique invariant mean, using that $B(G) \subset M_0A(G) \subset \mathrm{WAP}(G)$. It is not known whether the space $M_0A(\mathcal{C})$ of a rigid $C^{\ast}$-tensor category admits a unique invariant mean, in particular because it is not known what the natural analogue of $\mathrm{WAP}(G)$ for rigid $C^{\ast}$-tensor categories should be. For locally compact quantum groups, WAP algebras were studied more thoroughly in \cite{dasdaws}. However, to the authors' knowledge, the existence of an invariant mean on the WAP algebra of a locally compact quantum group $\G$ is only known in the case where $\G$ is amenable \cite{runde}.  

  The unique invariant mean on $M_0A(G)$ leads in \cite{haagerupknudbydelaat} to the notion of property (T$^{\ast}$), defined in terms of the mean on $M_0A(G)$ being weak$^{\ast}$-continuous, which obstructs the Approximation Property of Haagerup and Kraus (see \cite{haagerupkraus}). The first examples of groups without the latter property were provided only recently (see \cite{lafforguedelasalle}, \cite{haagerupdelaat1}, \cite{haagerupdelaat2}, \cite{haagerupknudbydelaat} and \cite{liao}). It is still an open problem to find an example of a quantum group without the analogue of the Approximation Property.
\end{remark}
We will now compare property (T) for rigid $C^{\ast}$-tensor categories with other versions of property (T). In the case of discrete quantum groups, we have the following definition of property (T) (see \cite{kyed}), which is equivalent to the one introduced by Fima in \cite{fima}.
\begin{definition} \label{yuki1}
Let $\G$ be a compact quantum group. The discrete dual $\Ghat$ has property (T) if one (and hence all) of the following equivalent conditions is satisfied.
\begin{enumerate}[(i)]
\item If a net of states $(\omega_\lambda)$ in $\uniG^*$ converges to $\eps$ pointwise, then it converges in norm.
\item There exists a projection $p \in \uniG$ such that $x p = \varepsilon(x) p$ for all $x \in \uniG $.
\end{enumerate}
\end{definition}
It was shown in \cite{dawsskalskiviselter} that, also in the more general framework of locally compact quantum groups, condition (i) of the previous definition is equivalent to the conventional notion of property (T) in terms of (almost) invariant vectors. The first part of the following theorem is \cite[Proposition 6.3]{popavaes}, and the second part was proven in \cite{arano1}. 
\begin{theorem} \label{propTtheorem}
Let $\G$ be a compact quantum group. The following conditions are equivalent:
\begin{enumerate}[(i)]
\item the category $\Rep \G$ has property (T) for rigid $C^{\ast}$-tensor categories,
\item the discrete dual $\Ghat$ has central property (T), i.e.~if a net $(\omega_\lambda)$ of central states on $C_u(\mathbb{G})^{\ast}$ converges in the weak$^{\ast}$-topology, then it converges in norm.
\end{enumerate}
Moreover, if we assume the Haar state on $\G$ to be tracial, this is equivalent to the discrete dual $\Ghat$ having (non-central) property (T).
\end{theorem}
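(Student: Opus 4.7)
The plan is to translate everything into statements about functionals on $\uniG$ via the well-known correspondence between cp-multipliers on $\Rep \G$ and central positive functionals on $\uniG$. For a compact quantum group, a cp-multiplier $\varphi : \Irr(\Rep \G) \to \C$ corresponds canonically to a central positive functional $\omega_{\varphi}$ on $\uniG$, determined by its values on the characters $\chi_{\alpha}$ of the irreducible corepresentations (essentially $\omega_{\varphi}(\chi_{\alpha}) = d(\alpha) \varphi(\alpha)$). After normalization this is a bijection between cp-multipliers and central states, matching the trivial multiplier $\varphi_{\ep} \equiv 1$ with the counit $\ep$.

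For the equivalence (i) $\Leftrightarrow$ (ii), I would simply push all convergence conditions through this bijection. Pointwise convergence of cp-multipliers to $\varphi_{\ep}$ corresponds to weak$^{*}$-convergence of the associated central states to $\ep$, because a central state is determined by its values on characters. Uniform convergence of multipliers on $\Irr(\Rep \G)$ corresponds to norm convergence of the corresponding central states on $\uniG$. With this dictionary, condition (i) is just a reformulation of condition (ii).

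For the \emph{moreover} part, under traciality of the Haar state, one direction is trivial: if $\Ghat$ has non-central property (T) as in Definition \ref{yuki1}, then restricting to central nets immediately gives central property (T). For the reverse direction, given a net of arbitrary (not necessarily central) states $(\omega_{\lambda})$ on $\uniG$ converging weak$^{*}$ to $\ep$, my approach is to average each $\omega_{\lambda}$ against the adjoint action of $\G$ on itself, producing a net of central states $\omega_{\lambda}^{\mathrm{cen}}$ which still converges weak$^{*}$ to $\ep$ (the counit being already central). Central property (T) then forces $\|\omega_{\lambda}^{\mathrm{cen}} - \ep\| \to 0$, and the task reduces to comparing $\omega_{\lambda}$ with $\omega_{\lambda}^{\mathrm{cen}}$ quantitatively.

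The main obstacle is this last comparison: without extra structure there is no reason that convergence of the central average controls convergence of the original net. Traciality enters here decisively, since it makes the GNS construction well behaved with respect to the adjoint action and allows one to apply a Cauchy--Schwarz-type argument with the Haar state to bound $\|\omega_{\lambda} - \omega_{\lambda}^{\mathrm{cen}}\|$ by quantities that also vanish in the limit, using that on the tracial side the adjoint representation decomposes nicely. Implementing this bound rigorously is the substance of \cite{arano1} and is the technically hardest step of the proof.
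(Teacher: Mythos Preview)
The paper does not prove this theorem in the text; it attributes (i)$\Leftrightarrow$(ii) to \cite[Proposition~6.3]{popavaes} and the tracial equivalence to \cite{arano1}, so there is no in-paper argument to compare against line by line.

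Your sketch of (i)$\Leftrightarrow$(ii) is correct and is precisely the content of the cited Popa--Vaes proposition: the bijection between normalized cp-multipliers on $\Rep\G$ and central states on $\uniG$ identifies pointwise convergence of multipliers with weak$^{*}$-convergence of the associated states and uniform convergence with norm convergence, so the two formulations of property~(T) coincide.

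For the ``moreover'' part you correctly isolate the hard direction (central $\Rightarrow$ non-central under traciality), but the mechanism you propose has a gap. There is no reason that weak$^{*}$-convergence $\omega_\lambda \to \ep$ alone forces $\|\omega_\lambda - \omega_\lambda^{\mathrm{cen}}\| \to 0$; a Cauchy--Schwarz estimate against the Haar state controls individual matrix coefficients, not the dual norm uniformly, and ``the adjoint representation decomposes nicely'' does not close this gap by itself. If that bound were available from soft arguments, the result would not require the machinery of \cite{arano1}. The proof there does not proceed by averaging states: it passes through the Drinfeld double. Admissible representations of the fusion algebra (the categorical side) are identified with unitary spherical representations of the quantum double, and in the Kac case every unitary representation of $\Ghat$ can be induced to a unitary representation of the double; traciality is exactly what guarantees that this induction remains unitary. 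The categorical Kazhdan projection then yields, via this correspondence, a Kazhdan projection for $\Ghat$. So while your outline names the right ingredients, the actual bridge is representation-theoretic rather than a direct norm comparison between a state and its central average.
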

We will now complete the picture by involving property (T) for von Neumann algebras. We use the following two characterizations of this property (see \cite[Chapter 12]{brownozawa} for the equivalence).
\begin{definition}
A finite von Neumann algebra $(M,\tau)$ has property (T) if one (and hence all) of the following equivalent conditions is satisfied.
\begin{enumerate}[(i)]
\item If $(\Phi_{\lambda}: M \to M)$ is a net of unital completely positive $\tau$-preserving maps converging to the identity pointwise on $L^2(M)$, i.e.~$ \| \Phi_{\lambda}(x) - x \|_2 \to 0, \ \lambda \to \infty $ for all $x \in M$, then it already converges in norm, i.e.
\[
  \sup_{x \in M_1} \| \Phi_{\lambda}(x) - x \|_2 \to 0 \quad \textrm{as} \quad \lambda \to \infty.
\]
\item For any $M$-bimodule $\mathcal H$ and any net $(\xi_{\lambda})$ of unit vectors satisfying
\[
  \langle x \xi_{\lambda} y, \xi_{\lambda} \rangle_{\mathcal H}  \to  \tau(xy) \quad \text{as} \quad \lambda \to \infty
\]
for all $x,y \in M$ and $\tau(x) = \langle x \xi_{\lambda}, \xi_{\lambda} \rangle = \langle  \xi_{\lambda} x, \xi_{\lambda} \rangle$ for all $\lambda$, there exists a net of $M$-central vectors $(\mu_{\lambda})$ with
\[
  \| \xi_{\lambda} - \mu_{\lambda} \| \to 0 \quad \textrm{as} \quad \lambda \to \infty.
\] 
\end{enumerate}
\end{definition}
The following theorem is a generalization of \cite[Theorem 3.1]{fima}. However, as Stefaan Vaes pointed out to us, the proof in \cite{fima} contains a mistake. Indeed, at a critical point in the proof of \cite[Theorem 3.1]{fima}, it is stated that for two irreducible objects $x,y \in \Irr(\G)$, one has $x \subset x \ot y$ if and only if $y = \1$. This is false, whenever $\hat{\G}$ is not a group. We thank Stefaan Vaes for providing us with a new proof, which we include here with his kind permission.
\begin{theorem} \label{thm:fima}
Let $\G$ be a compact quantum group with a tracial Haar state. Then $\Ghat$ has (central) property (T) if and only if $L^{\infty}(\mathbb{G})$ has property (T).
\end{theorem}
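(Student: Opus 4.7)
My plan is to prove both implications through a correspondence between central states on $\uniG$ and a specific class of trace-preserving ucp maps on $M := L^{\infty}(\G)$. A central state $\omega$ on $\uniG$ restricts on each Peter--Weyl component $C_{\alpha} \subset M$ (for $\alpha \in \Irr(\G)$) to a multiple $\varphi_{\omega}(\alpha) \cdot \mathrm{tr}_{\alpha}$ of the normalized categorical trace, and the resulting function $\varphi_{\omega} : \Irr(\G) \to \C$ is precisely the cp multiplier associated to $\omega$ via the correspondence from Theorem \ref{propTtheorem}. The ``multiplier map'' $\Phi_{\omega} : M \to M$ acting as the scalar $\varphi_{\omega}(\alpha)$ on each $C_{\alpha}$ is then a trace-preserving unital completely positive map (trace preservation because $\tau$ kills $C_\alpha$ for $\alpha \neq \1$, and ucp because $\varphi_\omega$ is cp). This dictionary is the core tool for both directions.

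For the implication $(\Leftarrow)$, assuming $M$ has property (T), I start from a net of central states $(\omega_{\lambda})$ on $\uniG$ converging weak$^{\ast}$ to $\eps$. The corresponding maps $(\Phi_{\omega_{\lambda}})$ are trace-preserving ucp maps on $M$, and the weak$^{\ast}$ convergence translates to $\varphi_{\omega_{\lambda}}(\alpha) \to 1$ for each $\alpha$, hence to $\Phi_{\omega_{\lambda}} \to \id_{M}$ pointwise in the $\| \cdot \|_2$ norm (each $C_\alpha$ being finite-dimensional). Property (T) of $M$ upgrades this to uniform $\| \cdot \|_2$-convergence on $M_1$; testing against normalized matrix coefficients in each $C_{\alpha}$ then recovers $\sup_{\alpha}|\varphi_{\omega_{\lambda}}(\alpha) - 1| \to 0$, which gives the norm convergence $\omega_{\lambda} \to \eps$ required by condition (i) of Definition \ref{yuki1}.

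For the converse, I will use the bimodule characterization of property (T) of $M$. Given an $M$-bimodule $H$ with approximately $M$-central, approximately tracial unit vectors $(\xi_{\lambda})$, the left and right $M$-actions factor through the quotient $\uniG \twoheadrightarrow M$, giving commuting representations $\pi_L, \pi_R$ of $\uniG$ on $H$ which combine via the coproduct and antipode into an adjoint representation $\mathrm{ad}(a) = \pi_L(a_{(1)}) \pi_R(S(a_{(2)}))$ whose fixed vectors are exactly the $M$-central vectors of $H$. The vector states $\omega_{\lambda}(a) = \langle \mathrm{ad}(a)\xi_{\lambda}, \xi_{\lambda} \rangle$ are central states on $\uniG$, and the almost-tracial and almost-central conditions on $\xi_\lambda$ translate into weak$^{\ast}$ convergence $\omega_{\lambda} \to \eps$. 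Central property (T) then forces norm convergence, and a standard GNS-type argument (extracting the fixed part of $\xi_\lambda$ under $\mathrm{ad}$) produces central vectors $\mu_{\lambda} \in H$ with $\|\xi_{\lambda} - \mu_{\lambda}\| \to 0$.

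The main obstacle is the careful setup of the adjoint representation in the $(\Rightarrow)$ direction: one must verify that the $M$-bimodule action factors normally through $\uniG$, that $\mathrm{ad}$ is a well-defined bounded $\ast$-representation, that its fixed vectors coincide with bimodule-central vectors, and that the almost-tracial, almost-central hypotheses really do produce central states converging to $\eps$. Traciality of the Haar state is essential here, both in order to identify central and non-central property (T) of $\Ghat$ via Theorem \ref{propTtheorem}, and to align the antipode-twisted adjoint construction with the modular structure so that the left and right $M$-actions on $H$ are sufficiently symmetric for the adjoint representation to behave like a genuine group-theoretic adjoint.
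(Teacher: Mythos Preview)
Your overall architecture matches the paper's: multiplier maps $\Phi_{\omega}$ for the direction $(\Leftarrow)$, and an adjoint-type representation on an $M$-bimodule for $(\Rightarrow)$. The $(\Leftarrow)$ argument is essentially identical to the paper's, including the final step of testing against matrix coefficients (the paper amplifies with $\id_\pi \otimes \Psi_\lambda$ applied to the unitary $u^\pi$, which is the same idea).

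There is, however, a genuine gap in your $(\Rightarrow)$ direction. The assertion that the vector states $\omega_\lambda(a) = \langle \mathrm{ad}(a)\xi_\lambda,\xi_\lambda\rangle$ are \emph{central} states on $\uniG$ is not justified, and in fact fails in general. Already for $\G=\widehat{\Gamma}$ with $\Gamma$ a discrete group, your $\mathrm{ad}$ is the conjugation representation $g\mapsto (\xi\mapsto g\xi g^{-1})$ on an $L(\Gamma)$-bimodule, and $\omega_\lambda(g)=\langle g\xi_\lambda g^{-1},\xi_\lambda\rangle$ is a class function on $\Gamma$ only if $\xi_\lambda$ is already conjugation-invariant, which is precisely what you are trying to produce. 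So central property~(T) cannot be invoked on these states. In addition, the ``standard GNS-type argument'' you sketch---going from norm convergence $\omega_\lambda\to\eps$ to nearby $\mathrm{ad}$-fixed vectors---is not standard in this form; norm convergence of the coefficient states does not by itself yield a close invariant vector inside the given Hilbert space.

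The paper closes this gap by passing (via Theorem~\ref{propTtheorem}, using traciality of the Haar state) to \emph{non-central} property~(T) and using the Kazhdan projection $q\in\uniG$ of Definition~\ref{yuki1}(ii). One first checks the stronger almost-invariance $\|\Theta(x)\xi_\lambda-\eps(x)\xi_\lambda\|\to 0$ for all $x\in\uniG$ (an explicit computation with matrix coefficients, using both the tracial hypotheses on $\xi_\lambda$), and then simply sets $\mu_\lambda=\Theta(q)\xi_\lambda$. The relation $xq=\eps(x)q$ gives $\|\xi_\lambda-\mu_\lambda\|\to 0$, and $M$-centrality of $\mu_\lambda$ is verified directly from $\Theta(u^\pi_{ij})\mu_\lambda=\delta_{ij}\mu_\lambda$. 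Your outline is on the right track, but you should replace the centrality claim and the GNS step by this Kazhdan-projection argument.
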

\begin{proof}
Suppose that $\Ghat$ has property (T), and let $\mathcal H$ be a $\vNaG$-bimodule and $(\xi_{\lambda})$ a net of unit vectors in $\mathcal H$ such that $\langle x \xi_{\lambda} y , \xi_{\lambda} \rangle \to h(xy) \ \forall x,y \in \vNaG$ and $ h(x) = \langle x \xi_{\lambda}, \xi_{\lambda} \rangle = \langle  \xi_{\lambda} x, \xi_{\lambda} \rangle$ for all $\lambda$ and $x\in M$. We have to find a net $(\mu_{\lambda})$ of $\vNaG$-central vectors such that $ \| \xi_{\lambda} - \mu_{\lambda} \| \to 0$. For every $\pi \in \Irr(\G)$, choose a unitary matrix $u^{\pi} = (u_{ij}^{\pi})$ representing $\pi$. Since the Haar state is tracial, we can assume that $u^{\bar{\pi}} = \overline{u^{\pi}}$.
Define the linear map
\[ \Theta: \cO(\G) \to B(\cH) \ ; \ \Theta(u_{ij}^{\pi}) \xi = \sum_{k=1}^{d(\pi)} u_{ik}^{\pi} \xi ( u_{jk}^{\pi})^{*}, \ \ \ (\pi \in \Irr(\G)) \]
and, denoting the the coinverse of $\G$ by $S$, observe that $\Theta = \vartheta \circ \Delta$ where $\vartheta: \cO(\G) \ot \cO(\G) \to B(\cH)$ is the $*$-homomorphism defined by $\vartheta(a \ot b) \xi = a \xi S(b), \ \xi \in \cH$.
Hence $\Theta$ is a $*$-homomorphism as well and therefore extends to $\uniG$. Moreover, the conditions on $(\xi_{\lambda})$ imply
\[ \| \Theta(x)\xi_{\lambda} - \eps(x) \xi_{\lambda} \| \to 0 \ \ \ \forall x \in \uniG. \]
Indeed, it suffices to show this for $x$ being a coefficient of a irreducible corepresentation $\pi \in \Irr(\G)$ and in that case one computes
\begin{align*}
\| \Theta(u_{ij}^{\pi})\xi_{\lambda} - \delta_{ij} \xi_{\lambda} \|^2 \xrightarrow{\lambda} \sum_{k,l=1}^{d(\pi)} h((u_{il}^{\pi})^*u_{ik}^{\pi}(u_{jk}^{\pi})^*u_{jl}^{\pi}) - 2 \sum_{k=1}^{d(\pi)} h(u_{ik}^{\pi}(u_{jk}^{\pi})^*) + \delta_{ij} = 0.
\end{align*}
Since $\Ghat$ has property (T), by Definition \ref{yuki1}, we can find a projection $q \in \uniG$ such that $xq = \varepsilon(x) q$ for all $x \in \uniG$ and in particular we have $\varepsilon(q) =1$. Defining $\mu_{\lambda} = \Theta(q) \xi_{\lambda}$, it follows that $ \| \xi_{\lambda} - \mu_{\lambda} \| \to 0$. It only remains to prove that the vector $\mu_{\lambda}$ is $\vNaG$-central for every $\lambda$. To see this, observe first that for $\pi \in \Irr(\G)$, we have
\[ \sum_{k=1}^{d(\pi)} u_{ik}^{\pi} \mu_{\lambda}(u_{jk}^{\pi})^* = \Theta(u_{ij}^{\pi}) \mu_{\lambda} = \Theta(u_{ij}^{\pi}q) \xi_{\lambda} = \delta_{ij} \mu_{\lambda}. \]
Therefore, the computation
\begin{align*}
\mu_{\lambda}u_{il}^{\pi} = \sum_{j=1}^{d(\pi)} \delta_{ij} \mu_{\lambda}u_{jl}^{\pi} 
= \sum_{j,k=1}^{d(\pi)} u_{ik}^{\pi} \mu_{\lambda}(u_{jk}^{\pi})^* u_{jl}^{\pi}
= u_{il}^{\pi} \mu_{\lambda}
\end{align*}
for $\pi \in \Irr(\G), \ i,l= 1, \dots, d(\pi),$ concludes the argument.\\

Let us now assume that $\vNaG$ has property (T). We prove that $\Rep \G$ has property (T), which is equivalent to central property (T) by Theorem \ref{propTtheorem}. Let $(\varphi_{\lambda})_{\lambda}$ be a net of cp-multipliers converging to $\varepsilon$ pointwise. Without loss of generality, we can assume that $\varphi_{\lambda}(1) = 1$ for all $\lambda$. By Proposition 6.1 in \cite{popavaes}, we obtain a net of $h$-preserving unital completely positive maps $\Psi_{\lambda}: \vNaG \to \vNaG$ such that $\Psi_{\lambda}(u_{ij}^{\pi}) = \varphi_{\lambda}(\pi) u_{ij}^{\pi} $ for all $\pi \in \Irr(\G), \ i,j = 1, \dots, \dim \pi$. The pointwise convergence of the net $(\varphi_{\lambda})_{\lambda}$ then implies that the unital completely positive maps $\Psi_{\lambda}: \vNaG \to \vNaG  $ converge pointwise to the identity, i.e.
\[ \| \Psi_{\lambda}(x) - x \|_2 \to 0, \ \ \ \forall x \in \vNaG \ \ \ \text{as} \ \ \ \lambda \to \infty. \]
It follows from the assumption that $\vNaG$ has property (T) that 
\[ \sup_{x \in \vNaG_1}\| \Psi_{\lambda}(x) - x \|_2 \to 0 \ \ \ \text{as} \ \ \ \lambda \to \infty. \]
Now, for all $\pi \in \Irr(\G)$ and all $\lambda$, consider the unital completely positive map 
\[ \id_{\pi} \ot \Psi_{\lambda}: B(H_{\pi}) \ot \vNaG \to  B(H_{\pi}) \ot \vNaG \]
and note that $(\id_{\pi} \ot \Psi_{\lambda})(u^{\pi}) = \varphi_{\lambda}(\pi) u^{\pi} $. Hence,
\[ \sup_{\pi \in \Irr (\G)} |\varphi_{\lambda}(\pi) -1| = \sup_{\pi \in \Irr (\G)} \| (\varphi_{\lambda}(\pi) -1) u^{\pi} \|_2 =  \sup_{\pi \in \Irr (\G)} \| (\id_{\pi} \ot \Psi_{\lambda})(u^{\pi}) - u^{\pi} \|_2 \to 0, \]
which establishes property (T) in the categorial sense.
\end{proof}

\end{document}